\tikzstyle arrowstyle=[scale=1]
\tikzstyle directed=[postaction={decorate,
decoration={markings,mark=at position .65 with {\arrow[arrowstyle]{stealth}}}}]
\newcolumntype{L}{>{$}l<{$}} 
\newcolumntype{C}{>{$}c<{$}}
\newtheorem{theorem}{Theorem}[section]
\newtheorem{lemma}[theorem]{Lemma}
\newtheorem{cor}[theorem]{Corollary}
\newtheorem{prop}[theorem]{Proposition}
\newtheorem{setup}[theorem]{Setup}
\theoremstyle{definition}
\newtheorem{definition}[theorem]{Definition}
\newtheorem{example}[theorem]{Example}
\newtheorem{obs}[theorem]{Observation}
\newtheorem{notation}[theorem]{Notation}
\theoremstyle{remark}
\newtheorem{remark}[theorem]{Remark}
\newtheorem{the context}[theorem]{The Context}
\numberwithin{equation}{theorem}
\numberwithin{equation}{section}
\newcommand{\cat}[1]{\mathcal{#1}}
\newcommand{\tor}{\operatorname{Tor}}
\newcommand{\Ker}{\operatorname{Ker}}
\newcommand{\ideal}[1]{\mathfrak{#1}}
\newcommand{\m}{\ideal{m}}
\newcommand{\bbp}{\mathbb{P}}
\renewcommand{\geq}{\geqslant}
\renewcommand{\leq}{\leqslant}
\renewcommand{\ker}{\Ker}
\newcommand{\maps}[5]{\xymatrix{#1 \ar[r]^-{#3} & #2 \\
#4 \ar@{|->}[r] & #5 \\}}
\newcommand{\mfa}{\mathfrak{a}}
\def\w{\wedge}
\renewcommand{\aa}{\textbf{a}}
\newcommand{\inn}{\textrm{in}}
\begin{document}
\title{Detecting Golodness via Gr\"obner Degeneration}

\author{Keller VandeBogert}
\date{\today}

\maketitle

\begin{abstract}
    In this paper we study the extent to which Golodness may be transferred along morphisms of DG-algebras. In particular, we show that if $I$ is a so-called fiber invariant ideal, then Golodness of $I$ is equivalent to Golodness of the initial ideal of $I$. We use this to transfer Golodness results for monomial ideals to more general classes of ideals. We also prove that any so-called rainbow monomial ideal with linear resolution defines a Golod ring; this result encompasses and generalizes many known Golodness results for classes of monomial ideals. We then combine the techniques developed to give a concise proof that maximal minors of (sparse) generic matrices define Golod rings, independent of characteristic. 
\end{abstract}

\section{Introduction}

Massey products are higher order (co)homological operations that take as inputs a tuple of homology classes and output a subset of $H_\bullet (A)$, where $A$ is some DG-algebra. In general, Massey products can be very difficult to compute explicitly and may only be partially defined as a function. Despite their complexity, these operations have warranted a close study due to their description in terms of differentials arising from associated Eilenberg-Moore spectral sequences (see \cite{gugenheim1974theory}). If all higher order Massey products are trivial (that is, they are defined and only contain $0$), it is known that the associated Eilenberg-Moore spectral sequences degenerate on the first page; this connection was used by Gugenheim and May to carry out explicit cohomological computations of spaces for which more elementary methods did not apply.

In \cite{golod1962homologies}, Golod proved that triviality of Massey products on the Tor-algebra of a local ring $(R,\m,k)$ was intimately connected to the Poincar\'e series $P^R_k (t)$. More precisely, Golod noticed that the existence of so-called \emph{trivial Massey operations} is equivalent to $P^R_k (t)$ attaining a coefficient-wise inequality, originally established by Serre. Such rings are now called \emph{Golod rings}, and despite their long history remain relatively misunderstood even for seemingly simple classes of rings.

Even for rings defined by monomial ideals, it is a difficult problem (with a rather bumpy history) to prove Golodness. Counterexamples to results of Berglund and J\"ollenbeck \cite{berglund2007golod} due to De Stefani \cite{de2016products} and Katth\"an \cite{katthan2017non} indicated that there were gaps in the proofs of some of the results of \cite{berglund2007golod}, and has led to more recent work exploring the extent to which the results of Berglund and J\"ollenbeck remain valid (see \cite{katthan2017non}, \cite{dao2020monomial}, and \cite{vandebogert2021products}). It is worth noting that understanding the Golodness properties of rings defined by monomial ideals has connections to understanding cohomological properties of more geometric objects such as toric varieties, since it is known that the integral cohomology algebra of the associated \emph{moment-angle} complex is isomorphic to the Tor-algebra of certain associated Stanley-Reisner ideals (for more on this perspective, see \cite{denham2007moment}).  

The goal of this paper is to study the extent to which Golodness is preserved upon deforming a ring. One of the main motivations for this is the observation that there are many results related to Golodness that are known only for monomial ideals, implying that there may be cases for which Golodness can be transferred along an associated Gr\"obner degeneration. We also prove that a large class of monomial ideals known as \emph{rainbow monomial ideals} with linear resolution are always Golod. This produces a large class of Golod rings defined by monomial ideals that encompasses and generalizes many known classes of rings throughout the literature. We combine this result with results established by Conca and Varbaro on squarefree Gr\"obner degenerations to prove that ideals of maximal minors of sparse generic matrices define Golod rings; this generalizes a result of Srinivasan using techniques that require no characteristic assumptions on the base field. 

The paper is organized as follows. In Section \ref{sec:backgroundAndTrivialMassey}, we introduce some necessary background and establish notation for the rest of the paper. We also prove some results related to the transfer of trivial Massey operations along certain types of morphisms of DG-algebras which will be employed in later sections (see Theorem \ref{thm:liftingMasseyToDeform} and Proposition \ref{prop:specializeMassey}). In Section \ref{sec:CIvarDiff} we study the case of transferring trivial Massey operations along morphisms $\phi : R \to S$ of polynomial rings whose kernel is generated by variable differences; in particular, the trivial Massey operation is directly induced by the map $\phi$ in such cases.

In Section \ref{sec:fiberInvGrobner}, we examine the relationship between Golodness of an ideal and its initial ideal. We show that ideals with \emph{fiber invariant} Betti numbers (see Definition \ref{def:fiberInv}) have the property that they are Golod if and only if the initial ideal is Golod (see Theorem \ref{thm:golodForFiberInv}). We also use results of Conca and Varbaro to give simple classes of ideals with fiber invariant Betti numbers (see Theorem \ref{thm:fiberInvLinear}). 

In Section \ref{sec:golodRainbows}, we consider a ubiquitous class of monomial ideals which have been referred to as \emph{rainbow monomial ideals} or facet ideals associated to $n$-partite $n$-uniform clutters. These ideals have already been studied for their connections to polarizations of Artinian monomial ideals (\cite{almousa2022polarizations}, \cite{almousa2021polarizations}), arithmetically Cohen-Macaulay sets of points in products of projective space, and the combinatorial structure of the homological invariants associated to such ideals (\cite{nematbakhsh2021linear}, \cite{vandebogert2021linear}). We prove that any rainbow monomial ideal with linear resolution is Golod; this result encompasses many of the Golodness results known for monomial ideals in the literature, since numerous classes of monomial ideals are obtained by specializing rainbow monomial ideals with linear resolution (see the statement of Corollary \ref{cor:golodMonomialIdeals} for a list of these classes of monomial ideals). 

Finally, in Section \ref{sec:golodnessSparseMatrices} we combine many of the results established earlier in the paper to give a concise proof that any ideal generated by the maximal minors of a (sparse) generic matrix is Golod. This extends a result due to Srinivasan, who proved that ideals of maximal minors are Golod in characteristic $0$. One of the ingredients of the proof is also of independent interest: more precisely, we show that any power of an ideal of maximal minors of a (sparse) generic matrix has fiber invariant Betti numbers, generalizing a result of Boocher \cite{boocher2012free}.

\section{Background and the Transfer of Trivial Massey Operations}\label{sec:backgroundAndTrivialMassey}

In this section, we first recall some definitions and establish notation essential to the remainder of the paper. This includes the definition of a DG-algebra and Golodness in terms of trivial Massey operations. We also prove two main results about the transfer of trivial Massey operations along certain classes of morphisms of DG-algebras which we will use for later sections to deduce Golodness. 

\begin{notation}
The notation $(F_\bullet , d_\bullet)$ or $(F_\bullet , d^F_\bullet)$ will denote a complex $F_\bullet$ with differentials $d_\bullet$ or $d^F_\bullet$, respectively (the notation $d^F_\bullet$ is used to distinguish differentials when multiple different complexes are present). When no confusion may occur, the more concise notation $F$ may be written, where the notation $d^F$ is understood to mean the differential of $F$ (in the appropriate homological degree); that is, $d^F (f_i) = d^F_i (f_i)$, where $f_i \in F_i$.

Given a complex $F_\bullet$ as above, elements of $F_n$ will often be written with a subscript $n$, for example as $f_n$, without specifying that they are in $F_n$.
\end{notation}

\begin{definition}\label{def:dga}
A \emph{differential graded algebra} $(F,d)$ (or \emph{DG-algebra}) over a commutative Noetherian ring $R$ is a complex of finitely generated free $R$-modules with differential $d$ and with a unitary, associative multiplication $F \otimes_R F \to F$ satisfying
\begin{enumerate}[(a)]
    \item $F_i F_j \subseteq F_{i+j}$,
    \item $d_{i+j} (f_i f_j) = d_i (f_i) f_j + (-1)^i f_i d_j (f_j)$,
    \item $f_i f_j = (-1)^{ij} f_j f_i$, and
    \item $f_i^2 = 0$ if $i$ is odd,
\end{enumerate}
where $f_k \in F_k$.
\end{definition}

In this paper, all DG-algebras will be assumed to be associative. The next definition will be essential for defining Golod rings. If $z_\lambda$ is a cycle in some complex $A$, then the notation $[z_\lambda]$ denotes the homology class of $z_\lambda$. In the following definition, $\overline{a} := (-1)^{|a|+1} a$, where $|a|$ denotes the homological degree of $a \in A$.  

\begin{definition}
Let $A$ be a DG-algebra with $H_0 (A) \cong k$. Then $A$ admits a \emph{trivial Massey operation} if for some $k$-basis $\mathcal{B} = \{ h_\lambda \}_{\lambda \in \Lambda}$, there exists a function
$$\mu : \coprod_{i=1}^\infty \cat{B}^i \to A$$
such that
\begingroup\allowdisplaybreaks
\begin{align*}
    &\mu ( h_\lambda) = z_\lambda \quad \textrm{with} \quad [z_\lambda] = h_\lambda, \ \textrm{and} \\
    &d \mu (h_{\lambda_1} , \dots , h_{\lambda_p} ) = \sum_{j=1}^{p-1} \overline{\mu (h_{\lambda_1} , \dots , h_{\lambda_j})} \mu (h_{\lambda_{j+1}} , \dots , h_{\lambda_p}). 
\end{align*}
\endgroup
\end{definition}

Observe that taking $p=2$ in the above definition yields that $H_{\geq 1} (A)^2 = 0$, so the induced algebra structure on $H(A)$ is trivial for a DG-algebra admitting a trivial Massey operation.

\begin{notation}
Throughout the paper, if $R$ is a regular local ring or standard graded $k$-algebra, the notation $K^R$ will denote the Koszul complex on the minimal generating set of the (graded) maximal ideal of $R$.
\end{notation}

We can now state the definition of a Golod ring.

\begin{definition}\label{def:Golod}
Let $(R,\m)$ be a local ring and let $K^R$ denote the Koszul complex on the generators of $\m$. If $K^R$ admits a trivial Massey operation $\mu$, then $R$ is called a \emph{Golod ring}. An ideal $I$ in some ring $Q$ will be called \emph{Golod} if the quotient $Q/I$ is Golod.
\end{definition}

\begin{remark}
Note that even though trivial Massey operations are defined with respect to some choice of basis, it is known that \emph{every} choice of basis for the Koszul homology of a Golod ring admits a trivial Massey operation \cite[Remark 5.2.3]{avramov1998infinite}. 

Equivalently, Golodness may be defined by saying that the multiplication is trivial and all higher order Massey products are defined and only contain the element $0$; we will use this perspective more explicitly in Section \ref{sec:golodRainbows}, see Remark \ref{rk:GolodAndMassey} in particular.
\end{remark}

The following result of Herzog and Maleki gives a large class of ideals that are known to be Golod; it is worth noting that the same result holds in characteristic $0$ for arbitrary ideals by a result of Herzog and Huneke (see \cite{herzog2013ordinary}). This result will be needed later in Section \ref{sec:golodnessSparseMatrices}.

\begin{theorem}[{\cite[Theorem 3.1(d)]{herzog2018koszul}}]\label{thm:herzogMaleki}
Let $I \subset R$ be a proper monomial ideal in a polynomial ring $R$. Then $I^n$ and $I^{(n)}$ are Golod for all $n \geq 2$. 
\end{theorem}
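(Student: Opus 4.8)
The plan is to reduce the statement to a characteristic-free, combinatorial version of the \emph{strongly Golod} criterion of Herzog and Huneke. Recall they call a graded ideal $J\subseteq R=k[x_1,\dots,x_n]$ strongly Golod when $\partial(J)\cdot\partial(J)\subseteq J$, where $\partial(J)$ is generated by all first partial derivatives of elements of $J$, and that strongly Golod ideals are Golod. Over a field of characteristic $p$ the honest derivative $\partial_i(x^a)=a_ix^{a-e_i}$ can vanish for the wrong reasons, so for a \emph{monomial} ideal $J$ I would replace it by the \emph{formal derivative} $D$: set $D_i(x^a)=x^{a-e_i}$ if $a_i\ge 1$ and $D_i(x^a)=0$ otherwise, and let $D(J)$ be the monomial ideal generated by the $D_i(m)$ as $m$ runs over a monomial generating set of $J$. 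The argument then splits into two halves: (1) $I^n$ and $I^{(n)}$ satisfy $D(J)^2\subseteq J$ for every $n\ge 2$; and (2) $D(J)^2\subseteq J$ forces $R/J$ to be Golod, with no restriction on $\operatorname{char}k$.

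Part (1) is the easy half. A monomial generator of $I^n$ is a product $u_1\cdots u_n$ of generators of $I$, and deleting one variable from one factor shows $D(I^n)\subseteq I^{n-1}D(I)$; hence for $n\ge 2$,
\[
D(I^n)\cdot D(I^n)\subseteq I^{2n-2}\cdot D(I)^2\subseteq I^{2n-2}\subseteq I^{n},
\]
using only $D(I)\subseteq R$ and $2n-2\ge n$. For symbolic powers, $I^{(n)}$ is again a monomial ideal, and a direct check — reducing via a monomial primary decomposition to the case of a power of a monomial prime $\mathfrak{p}$, where deleting a variable lowers the $\mathfrak{p}$-degree by at most one so that $D(\mathfrak p^m)\subseteq\mathfrak p^{m-1}$ — yields $D(I^{(n)})\subseteq I^{(n-1)}$; therefore, using $I^{(a)}I^{(b)}\subseteq I^{(a+b)}$ and $I^{(m)}\subseteq I^{(n)}$ for $m\ge n$,
\[
D(I^{(n)})\cdot D(I^{(n)})\subseteq (I^{(n-1)})^2\subseteq I^{(2n-2)}\subseteq I^{(n)}\qquad(n\ge 2).
\]
So $I^n$ and $I^{(n)}$ are both ``monomially strongly Golod.''

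Part (2) is the crux, and the step I expect to be the main obstacle. The point is that $D(J)^2\subseteq J$ is exactly what lets one realize products — and then all higher Massey products — of Koszul homology classes of $R/J$ as boundaries in $K:=K^R\otimes_R R/J$. A positive-degree class is represented by a multigraded Koszul cycle whose coefficients are monomials ``of formal-derivative type'' relative to $J$; the product of two such has all of its coefficients in $D(J)^2\subseteq J$ and so is already zero in $K$, and composing with the standard contracting homotopy of the Koszul complex produces a canonical chain $\mu(h,h')$ with $d\mu(h,h')=\overline{z}\,z'$. This replaces, in the monomial setting, the ``integration by parts'' identity Herzog and Huneke use over $\mathbb{Q}$. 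The real work is the inductive construction of the remaining terms: one must define $\mu(h_{\lambda_1},\dots,h_{\lambda_p})$ so that
\[
d\,\mu(h_{\lambda_1},\dots,h_{\lambda_p})=\sum_{j=1}^{p-1}\overline{\mu(h_{\lambda_1},\dots,h_{\lambda_j})}\,\mu(h_{\lambda_{j+1}},\dots,h_{\lambda_p}),
\]
which requires showing the right-hand side is a cycle (a sign bookkeeping with $\overline{a}=(-1)^{|a|+1}a$) and that, because the chains built at earlier stages still have formal-derivative type, it lands in $D(J)^2K=0$ so that the contracting homotopy applies again.

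Finally, I would remark that over a field of characteristic $0$ nothing beyond part (1) is needed, since the Herzog--Huneke theorem applies verbatim with honest derivatives; the formal-derivative argument of part (2) is precisely what removes the characteristic hypothesis, and it is available only for monomial ideals because only there is there an explicit monomial model for Koszul homology and its multiplication.
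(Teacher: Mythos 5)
First, a point of orientation: the paper does not prove this statement at all --- it is imported verbatim from Herzog--Maleki \cite[Theorem 3.1(d)]{herzog2018koszul} and used as a black box in Section \ref{sec:golodnessSparseMatrices} --- so there is no in-paper proof to compare against. Your plan is in fact a reconstruction of the Herzog--Maleki strategy: replace the Herzog--Huneke derivative ideal $\partial(J)$ by its formal, characteristic-free monomial analogue $D(J)$, check that $I^n$ and $I^{(n)}$ are ``monomially strongly Golod,'' and then invoke a characteristic-free version of ``strongly Golod implies Golod.'' Your Part (1) is essentially complete: $D(I^n)\subseteq I^{n-1}D(I)$ is correct, and although your reduction of the symbolic-power case to ``powers of monomial primes'' is imprecise (the primary components of $I^n$ need not be powers of primes), the containment $D(I^{(n)})\subseteq I^{(n-1)}$ does hold and can be verified directly on each component $I^nR_{\fp}\cap R$ of the intersection defining $I^{(n)}$.

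The genuine gap is in Part (2), exactly where you predict the obstacle. The assertion that every positive-degree Koszul homology class of $R/J$ is represented by a cycle whose coefficients lie in $D(J)$ is the entire content of the characteristic-free criterion, and you give no argument for it. In characteristic $0$ it follows from the Euler homotopy $\deg(z)\cdot z=d\sigma(z)+\sigma(dz)$, where $\sigma$ is built from partial derivatives and $\sigma(dz)$ has coefficients in $\partial(J)$ because $dz\in J\cdot K^R$; this is precisely the step that dies in characteristic $p$ when $p\mid\deg(z)$, and the whole point of the monomial hypothesis is to supply a multigraded substitute (which Herzog and Maleki construct using the fine grading on Koszul cycles of monomial ideals). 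Without that lemma, nothing in Part (2) gets off the ground. Separately, your worry about ``the inductive construction of the remaining terms'' is a non-issue: once representatives are chosen so that every pairwise product has coefficients in $D(J)^2\subseteq J$, hence vanishes identically in $K^R\otimes_RR/J$, you may simply take $\mu\equiv 0$ on every tuple of length at least two. Indeed, for $p=2$ the right-hand side of the defining identity is the literally zero product $\overline{z_{\lambda_1}}z_{\lambda_2}$, and for $p\geq 3$ every summand contains a factor $\mu(\cdots)$ evaluated on a tuple of length at least two, which is $0$; no contracting homotopy or sign bookkeeping is required. So the proposal follows the right strategy but establishes only the easy half; the hard half remains the unproved representability lemma.
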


For the remainder of this section, we consider the following question: suppose that $\phi : A \to B$ is a morphism of DG-algebras and either $A$ or $B$ admits a trivial Massey operation with respect to some basis. What conditions are needed on $\phi$ in order to transfer this Massey operation to $B$ or $A$, respectively? 

This question is relatively straightforward if $A$ has a trivial Massey operation, but is a little more subtle when one wants to ``pull back" a trivial Massey operation from $B$. We first introduce the following notation.

\begin{notation}
Given a DG-algebra $A$ with $H_0 (A) \cong k$, where $k$ is some field, the notation $\cat{B}^A$ will denote any subset $\cat{B}^A \subset H_\bullet (A)$ forming a $k$-basis for the homology algebra $H_\bullet (A)$. 

If $\phi : A \to B$ is a morphism of DG-algebras, the notation $\phi (\cat{B}^A)$ will denote the set of all elements of the form $\phi_* (h)$, where $h \in \cat{B}^A$ and $\phi_* : H_\bullet (A) \to H_\bullet (B)$ is the induced map on homology.
\end{notation}

The following theorem gives conditions for which a trivial Massey operations can be pulled back along a morphism of DG-algebras.

\begin{theorem}\label{thm:liftingMasseyToDeform}
Let $\phi : A \to B$ be a morphism of DG-algebras inducing a surjection on cycles and an injection on homology. Assume that there exists a trivial Massey operation $\mu$ on some basis $\cat{B}^B$ of $B$ with $\phi (\cat{B}^A) \subset \cat{B}^B$. Then there exists a trivial Massey operation $\mu'$ on $A$ such that
$$\phi \circ \mu' (h_{\lambda_1} , \dots , h_{\lambda_p} ) = \mu (\phi (h_{\lambda_1}) , \dots , \phi (h_{\lambda_p}) ),$$
for all choices of $\lambda_1 , \dots , \lambda_p$.
\end{theorem}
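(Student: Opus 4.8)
The plan is to construct $\mu'$ by induction on the length $p$ of a tuple $(h_{\lambda_1}, \dots, h_{\lambda_p})$ of elements of $\cat{B}^A$, arranging at every stage the compatibility $\phi \circ \mu'(h_{\lambda_1}, \dots, h_{\lambda_p}) = \mu(\phi(h_{\lambda_1}), \dots, \phi(h_{\lambda_p}))$; the right-hand side makes sense precisely because $\phi(\cat{B}^A) \subset \cat{B}^B$, so $\mu$ is already defined on every tuple drawn from $\phi(\cat{B}^A)$. For $p=1$: given $h_\lambda \in \cat{B}^A$, the element $z_{\phi(\lambda)} := \mu(\phi(h_\lambda))$ is a cycle of $B$ representing $\phi_*(h_\lambda)$. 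Since $\phi$ is surjective on cycles, choose a cycle $z'_\lambda \in A$ with $\phi(z'_\lambda) = z_{\phi(\lambda)}$; then $\phi_*([z'_\lambda]) = [z_{\phi(\lambda)}] = \phi_*(h_\lambda)$, and injectivity of $\phi_*$ forces $[z'_\lambda] = h_\lambda$. Set $\mu'(h_\lambda) := z'_\lambda$; both the defining properties of a trivial Massey operation in length $1$ and the compatibility with $\mu$ hold.

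For the inductive step, suppose $\mu'$ has been defined on all tuples of length $< p$, satisfying the trivial Massey relations and the compatibility with $\mu$. Given $(h_{\lambda_1}, \dots, h_{\lambda_p})$, set
$$c := \sum_{j=1}^{p-1} \overline{\mu'(h_{\lambda_1}, \dots, h_{\lambda_j})}\, \mu'(h_{\lambda_{j+1}}, \dots, h_{\lambda_p}) \in A.$$
A standard bookkeeping computation in the theory of Massey products — using only the DG-algebra axioms of Definition \ref{def:dga} and the inductive hypothesis $d\mu'(\,\cdot\,) = (\text{corresponding Massey sum})$ on shorter tuples — shows that $c$ is a cycle. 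Applying the DG-algebra map $\phi$, which is compatible with products and with the involution $a \mapsto \overline{a}$ since it preserves homological degree, and invoking the inductive compatibility, we get
$$\phi(c) = \sum_{j=1}^{p-1} \overline{\mu(\phi(h_{\lambda_1}), \dots, \phi(h_{\lambda_j}))}\, \mu(\phi(h_{\lambda_{j+1}}), \dots, \phi(h_{\lambda_p})) = d\, w_B,$$
where $w_B := \mu(\phi(h_{\lambda_1}), \dots, \phi(h_{\lambda_p}))$ and the last equality is the trivial Massey relation for $\mu$ on $B$.

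It remains to produce $w_A \in A$ with $d w_A = c$ and $\phi(w_A) = w_B$; then $\mu'(h_{\lambda_1}, \dots, h_{\lambda_p}) := w_A$ completes the induction. Since $\phi(c) = d w_B$ is a boundary, $\phi_*([c]) = 0$, so injectivity of $\phi_*$ gives $[c] = 0$ in $H(A)$; choose $w' \in A$ with $d w' = c$. Then $d(\phi(w') - w_B) = \phi(c) - \phi(c) = 0$, so $\phi(w') - w_B$ is a cycle of $B$, and surjectivity on cycles yields a cycle $v \in A$ with $\phi(v) = \phi(w') - w_B$. Put $w_A := w' - v$: then $d w_A = d w' = c$ because $v$ is a cycle, and $\phi(w_A) = \phi(w') - (\phi(w') - w_B) = w_B$, as desired.

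The only genuinely delicate points are the verification that $c$ is a cycle — the same computation that underlies the well-definedness of Massey products, which I would either reproduce or cite — and the two-step lift in the last paragraph: one first lifts a null-homotopy of $c$ using injectivity on homology, then corrects it by a suitable cycle using surjectivity on cycles so that it maps exactly onto the prescribed $w_B$ rather than merely onto something homologous to it. Everything else is routine induction, and the hypotheses on $\phi$ (surjective on cycles, injective on homology) are used exactly where indicated.
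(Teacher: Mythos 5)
Your proof is correct and follows essentially the same inductive argument as the paper: lift $\mu(\phi(h_{\lambda_1}),\dots,\phi(h_{\lambda_p}))$ by first using injectivity on homology to find a preimage of the Massey sum under $d^A$, then correcting by a cycle (via surjectivity on cycles) so that the lift maps exactly onto the prescribed element. You are in fact slightly more careful than the paper at two points — verifying $[z'_\lambda]=h_\lambda$ in the base case and flagging that the Massey sum $c$ must be checked to be a cycle — but the underlying strategy is identical.
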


\begin{proof}
Proceed by induction on $p$. Let $p=1$ and $\{ h_\lambda \}_{\lambda \in \Lambda}$ denote the basis $\cat{B}^A$, and define $\mu' (h_{\lambda_1} ) := z_{\lambda_1}$, where $\phi ( z_{\lambda_1} ) = \mu (\phi (h_{\lambda_1}))$ (the element $z_{\lambda_1}$ exists since the induced map $\phi : Z_\bullet(A) \to Z_\bullet (B )$ is assumed to be surjective). By construction, $\phi \circ \mu'( h_{\lambda_1}) = \mu (\phi (h_{\lambda_1}))$. 

Assume now that $p >1$. For any choice of $\lambda_1 , \dots , \lambda_p$, consider the element 
$$m := \sum_{i=1}^{p-1} \overline{\mu' (h_{\lambda_1} , \dots , h_{\lambda_i})} \mu' ( h_{\lambda_{i+1}} , \dots , h_{\lambda_p}).$$
By the inductive hypothesis, $\phi (m) = d \mu (\phi (h_{\lambda_1}) , \dots , \phi (h_{\lambda_p}))$, and since $\phi$ induces an injection on homology, $m = d^A (a)$ for some $a \in A$. By selection of $a$, this implies that 
$$\phi (a) - \mu (\phi (h_{\lambda_1}) , \dots , \phi (h_{\lambda_p} ) )\in Z_\bullet (B).$$
Since $\phi : Z_\bullet (A) \to Z_\bullet (B)$ is surjective, there exists $a' \in Z_\bullet (A)$ such that
$$\phi (a - a') = \mu (\phi (h_{\lambda_1}) , \dots , \phi (h_{\lambda_p} )).$$
Defining $\mu' (h_{\lambda_1} , \dots , h_{\lambda_p} ) := a - a'$, it follows that $\mu'$ satisfies the required properties.
\end{proof}

The following proposition is stated here to give a more specialized class of morphisms of DG-algebras for which the hypotheses of Theorem \ref{thm:golodAndDeformation} are satisfied.

\begin{prop}\label{prop:qisoandSurj}
Let $\phi : A \to B$ be a surjective quasi-isomorphism of complexes. Then the induced map $\phi : Z_\bullet (A) \to Z_\bullet (B)$ is a surjection.
\end{prop}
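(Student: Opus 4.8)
The statement is a soft homological fact, so the plan is a short diagram chase. Let $K = \ker \phi$, which is a subcomplex of $A$ since $\phi$ commutes with differentials; surjectivity of $\phi$ then gives a short exact sequence of complexes $0 \to K \to A \xrightarrow{\phi} B \to 0$. The associated long exact sequence in homology, together with the hypothesis that $\phi$ is a quasi-isomorphism (so that every map $H_n(A) \to H_n(B)$ is an isomorphism), forces $H_n(K) = 0$ for all $n$; that is, $K$ is acyclic. This is the only place the quasi-isomorphism hypothesis enters, and it is the heart of the argument, though it is entirely routine.

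With $K$ acyclic in hand, I would argue as follows. Fix a degree $n$ and a cycle $z \in Z_n(B)$. Since $\phi$ is degreewise surjective, choose $a \in A_n$ with $\phi(a) = z$. Then $\phi(d^A a) = d^B \phi(a) = d^B z = 0$, so $d^A a$ lies in $K_{n-1}$, and since $d^A(d^A a) = 0$ it is a cycle of the subcomplex $K$ (the differential on $K$ being the restriction of $d^A$). Acyclicity of $K$ gives $k \in K_n$ with $d^A k = d^A a$. Then $a - k \in Z_n(A)$ and $\phi(a - k) = \phi(a) - \phi(k) = z - 0 = z$, since $k \in K = \ker\phi$. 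Hence $\phi \colon Z_n(A) \to Z_n(B)$ is surjective, for every $n$.

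An alternative that avoids naming the kernel explicitly: given $z \in Z_n(B)$, use surjectivity of $\phi_*$ on homology to pick a cycle $w \in Z_n(A)$ with $\phi(w) - z = d^B b$ for some $b \in B_{n+1}$; lift $b$ to $a' \in A_{n+1}$ by degreewise surjectivity; then $w - d^A a'$ is a cycle in $A_n$ mapping to $\phi(w) - d^B b = z$. I would likely present the first version, as the acyclic-kernel formulation matches the homological-algebra idiom used elsewhere in the paper. Either way, there is no real obstacle here — the proposition is essentially a lemma packaging the input needed to apply Theorem \ref{thm:liftingMasseyToDeform}, and the only points requiring a word of care are that $\ker\phi$ is genuinely a subcomplex and that its differential agrees with that of $A$, so that "cycle of $K$" and "cycle of $A$ lying in $K$" coincide.
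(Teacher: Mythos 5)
Your argument is correct, and it takes a genuinely different route from the paper's. The paper fixes an \emph{arbitrary} preimage $a_i$ of the cycle $b_i$ and tries to show that $a_i$ itself is already a cycle: it chooses $a_i' \in Z_i(A)$ and $b_{i+1}$ with $b_i = \phi(a_i') + d^B(b_{i+1})$, and then invokes injectivity of $\phi_*$ to conclude $a_i - a_i' = d^A(a_{i+1})$. You instead accept that the arbitrary lift need not be a cycle and \emph{correct} it, either by subtracting an element of the acyclic kernel $K = \ker\phi$ (whose acyclicity you extract from the long exact sequence) or, in your alternative, by starting from a cycle $w$ with $[\phi(w)] = [z]$ and subtracting $d^A a'$. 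Your formulation is the safer one: the paper's key step applies injectivity of $\phi_*$ on homology to the element $a_i - a_i'$, which at that point is not known to be a cycle of $A$, so the inference is not justified as written; indeed the literal assertion that \emph{every} preimage of a cycle is a cycle already fails for the surjective quasi-isomorphism from the acyclic two-term complex $k \xrightarrow{\ =\ } k$ onto the zero complex. What is actually needed downstream in Theorem \ref{thm:liftingMasseyToDeform} is only that \emph{some} preimage is a cycle, which is exactly what your adjust-the-lift argument produces. As a bonus, your kernel version isolates the真 hypothesis: any degreewise surjection of complexes with acyclic kernel is surjective on cycles, of which the proposition is the special case where acyclicity of the kernel comes from the quasi-isomorphism assumption.

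\begingroup\footnotesize\emph{(Correction: the phrase ``the真 hypothesis'' above should read ``the true hypothesis''.)}\endgroup
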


\begin{proof}
Let $b_i \in Z_i (B)$ be any cycle. By assumption, $\phi (a_i) = b_i$ for some $a_i \in A_i$. It remains to show that $a_i$ is in fact a cycle. The assumption that $\phi$ is a quasi-isomorphism implies that there also exists $a_i' \in Z_i (A)$ and $b_{i+1} \in B_{i+1}$ such that $b_i = \phi (a_i') + d^B (b_{i+1})$, so that $\phi (a_i - a_i') = d^B (b_{i+1})$. Since $\phi_*$ is in particular an injection, this implies that $a_i - a_i' = d^A (a_{i+1})$ for some $a_{i+1} \in A_{i+1}$. This of course implies that $a_i \in Z_i (A)$. 
\end{proof}

The following proposition gives conditions for pushing forward a trivial Massey operation along a morphism of DG-algebras.

\begin{prop}\label{prop:specializeMassey}
Let $\phi : A \to B$ be a quasi-isomorphism of DG-algebras. If $\mu$ is a trivial Massey operation with respect to some basis $\cat{B}^A$ of $A$, then the function $\mu'$ defined by
$$\mu' (\phi(h_{\lambda_1}) , \dots , \phi (h_{\lambda_p}) ) := \phi \big( \mu (h_{\lambda_1} , \dots , h_{\lambda_p}) \big)$$
is a trivial Massey operation on $B$ with respect to the basis $\phi ( \cat{B}^A )$. 
\end{prop}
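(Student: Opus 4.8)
The plan is to verify directly that the function $\mu'$ is well-defined and then check the two defining axioms of a trivial Massey operation. First I would note that since $\phi$ is a quasi-isomorphism of DG-algebras, the induced map $\phi_* : H_\bullet(A) \to H_\bullet(B)$ is an isomorphism of graded algebras, so $\phi(\cat{B}^A) = \{\phi_*(h_\lambda)\}_{\lambda \in \Lambda}$ really is a $k$-basis for $H_\bullet(B)$; in particular the assignment $\phi_*(h_\lambda) \mapsto h_\lambda$ is a bijection $\phi(\cat{B}^A) \to \cat{B}^A$, so a tuple $(\phi(h_{\lambda_1}),\dots,\phi(h_{\lambda_p}))$ determines the tuple $(h_{\lambda_1},\dots,h_{\lambda_p})$ uniquely, and $\mu'$ is well-defined on $\coprod_i (\phi(\cat{B}^A))^i$.

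Next I would check the base case: $\mu'(\phi(h_\lambda)) = \phi(\mu(h_\lambda)) = \phi(z_\lambda)$, and since $\phi$ is a chain map, $\phi(z_\lambda)$ is a cycle whose homology class is $\phi_*[z_\lambda] = \phi_*(h_\lambda)$, which is exactly the basis element indexing this slot. For the recursive relation, I would apply $\phi$ to the defining equation for $\mu$, using that $\phi$ is a morphism of DG-algebras — hence commutes with $d$ and is multiplicative — and that $\phi$ commutes with the bar involution $\overline{(\,\cdot\,)}$ since $\phi$ preserves homological degree. This gives
$$
d^B \phi\big(\mu(h_{\lambda_1},\dots,h_{\lambda_p})\big) = \sum_{j=1}^{p-1} \overline{\phi\big(\mu(h_{\lambda_1},\dots,h_{\lambda_j})\big)}\,\phi\big(\mu(h_{\lambda_{j+1}},\dots,h_{\lambda_p})\big),
$$
which is precisely the required identity $d^B \mu'(\phi(h_{\lambda_1}),\dots,\phi(h_{\lambda_p})) = \sum_{j} \overline{\mu'(\phi(h_{\lambda_1}),\dots,\phi(h_{\lambda_j}))}\,\mu'(\phi(h_{\lambda_{j+1}}),\dots,\phi(h_{\lambda_p}))$ after substituting the definition of $\mu'$.

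This argument is essentially formal once well-definedness is in hand, so there is no serious obstacle; the only point requiring a moment's care is confirming that $\phi(\cat{B}^A)$ is genuinely a basis (so that $\mu'$ is unambiguously defined) and that $\phi$ interacts correctly with the sign convention $\overline{a} = (-1)^{|a|+1}a$ — both follow immediately from $\phi$ being a degree-preserving quasi-isomorphism of DG-algebras. I would remark, if relevant downstream, that unlike Theorem \ref{thm:liftingMasseyToDeform} this direction needs no surjectivity-on-cycles hypothesis, only that $\phi$ be a genuine morphism of DG-algebras inducing an isomorphism on homology.
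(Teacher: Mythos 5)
Your argument is correct and follows essentially the same route as the paper's proof: both observe that $\phi(\cat{B}^A)$ is a basis because $\phi_*$ is an isomorphism, and then verify the defining relation by applying $\phi$ to the relation for $\mu$ and using that $\phi$ commutes with the differential and the multiplication. The extra remarks on well-definedness, the base case, and compatibility with the sign convention $\overline{a}=(-1)^{|a|+1}a$ are fine elaborations of points the paper leaves implicit.
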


\begin{proof}
Since $\phi$ is a quasi-isomorphism, $\phi ( \cat{B}^A )$ is a basis for the homology algebra $H_\bullet (B)$. Using the fact that $\phi$ is a morphism of DG-algebras, one finds:
\begingroup\allowdisplaybreaks
\begin{align*}
    d^B \mu' (\phi (h_{\lambda_1}) , \dots , \phi (h_{\lambda_p}) ) &= d^B \phi  ( \mu (h_{\lambda_1} , \dots , h_{\lambda_p} ) \\
    &= \phi d^A \mu  (h_{\lambda_1} , \dots , h_{\lambda_p} ) \\
    &= \phi \Big( \sum_{i=1}^{p-1} \overline{\mu (h_{\lambda_1} , \dots , h_{\lambda_i} )} \mu ( h_{\lambda_{i+1}} , \dots , h_{\lambda_p} ) \Big) \\
    &= \sum_{i=1}^{p-1} \overline{\mu' (\phi (h_{\lambda_1}) , \dots , \phi (h_{\lambda_i}) )} \mu' (\phi (h_{\lambda_{i+1}}) , \dots , \phi (h_{\lambda_p}) ).
\end{align*}
\endgroup
Thus, $\mu'$ is a trivial Massey operation as desired.
\end{proof}

\section{Complete Intersections Generated by Variable Differences}\label{sec:CIvarDiff}

In this section, we consider transferring trivial Massey operations along morphisms of polynomial rings with kernel generated by variable differences. In particular, the results of Section \ref{sec:backgroundAndTrivialMassey} imply that Golodness can be transferred in either direction along such a morphism. This result in particular encompasses polarizations of monomial ideals, which for convenience we define below.

\begin{definition}
    Let $I$ be a monomial ideal in the polynomial ring $S = k[x_1,\ldots, x_n]$, such that for every index $i$, $x_i$ appears with power at most $m_i$ in any minimal generator of $I$. Let $\check{X}_i = \{x_{i1},x_{i2},\ldots, x_{i m_i}\}$ be a set of variables, and let $\tilde S = k[\check{X}_1,\ldots \check{X}_n]$ be the polynomial ring in the union of these variables. An ideal $\tilde I\subset \tilde S$ is a \emph{polarization} of $I$ if
\begin{align*}
\sigma = \{x_{11}-x_{12}, x_{11}-x_{13},\ldots,x_{11}-x_{1m_1}\} \cup \{x_{21}-x_{22},\ldots,x_{21}-x_{2m_2}\}\cup \ldots \\
\cup \{x_{n1}-x_{n2},\ldots, x_{n1}-x_{n,m_n}\}
\end{align*}
is a regular sequence in $\tilde S / \tilde I$ and $\tilde I \otimes \tilde S / \sigma \cong I$.
    \end{definition}

\begin{obs}\label{obs:indSurjKosz}
Let $\phi : R \to S$ be a surjective graded morphism of standard-graded $k$-algebras. Then $\phi$ induces a surjective morphism of DG-algebras $K^R \to K^S$. 
\end{obs}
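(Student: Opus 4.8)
The plan is to write the morphism down explicitly on a suitably chosen basis and then check that it is a surjective DG-algebra map; the only real content is bookkeeping with the exterior generators, since $\edim R$ and $\edim S$ need not agree.

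First I would record the (easy) linear-algebra input. As $R$ and $S$ are standard graded, $\mathfrak m_R/\mathfrak m_R^2\cong R_1$ and $\mathfrak m_S/\mathfrak m_S^2\cong S_1$, so a minimal homogeneous generating set of $\mathfrak m_R$ (resp.\ $\mathfrak m_S$) is exactly a $k$-basis of $R_1$ (resp.\ $S_1$), and $K^R=\bigwedge_R R^n$, $K^S=\bigwedge_S S^m$ with $n=\dim_k R_1$, $m=\dim_k S_1$, the Koszul differentials sending exterior generators to the chosen algebra generators. Since $\phi$ is graded and surjective and $S$ is generated in degree $1$, $\phi$ restricts to a $k$-linear surjection $R_1\twoheadrightarrow S_1$. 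I would then choose a $k$-basis $x_{m+1},\dots,x_n$ of $\ker(R_1\to S_1)$ together with elements $x_1,\dots,x_m\in R_1$ whose images form a $k$-basis of $S_1$; then $x_1,\dots,x_n$ is a $k$-basis of $R_1$, and setting $y_i:=\phi(x_i)$ gives a $k$-basis $y_1,\dots,y_m$ of $S_1$ with $y_{m+1}=\dots=y_n=0$. Build $K^R$ on $x_1,\dots,x_n$ and $K^S$ on $y_1,\dots,y_m$.

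Next I would define $\Phi\colon K^R\to K^S$ to be the map which is $\phi$ on coefficients, sends $e_i\mapsto \bar e_i$ for $i\le m$, sends $e_i\mapsto 0$ for $i>m$, and is extended multiplicatively; equivalently, $\Phi$ is the composite of $\bigwedge(\phi^{\oplus n})\colon \bigwedge_R R^n\to\bigwedge_S S^n$ with the augmentation $\bigwedge_S S^n=K^S\otimes_S\bigwedge_S S^{n-m}\to K^S$ that kills the last $n-m$ exterior generators (each of which maps to $0$ under the Koszul differential, so this augmentation is a DG-algebra map). Then I would verify the two required properties. (i) $\Phi$ commutes with differentials: by the Leibniz rule it suffices to check on the $e_i$, where $d^{K^S}\Phi(e_i)=y_i=\phi(x_i)=\Phi(d^{K^R}e_i)$ for $i\le m$, and both sides are $0$ for $i>m$ since $y_i=0$. (ii) $\Phi$ is surjective: it hits all of $S=(K^S)_0$ because $\phi$ is onto, and it hits $\bar e_1,\dots,\bar e_m$, which generate $K^S$ as an $S$-algebra. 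Compatibility with the multiplication and graded-commutativity are immediate from the definition.

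I do not expect a genuine obstacle. The one subtlety — already flagged — is that one cannot simply apply $\bigwedge$ to a single map $R^n\to S^n$, because $n=\edim R\ge \edim S=m$ in general; the fix is exactly the basis choice above, which forces the superfluous generators of $R_1$ into $\ker\phi$ so that "kill the extra exterior generators'' becomes a chain map.
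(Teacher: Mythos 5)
Your proof is correct and follows the same route as the paper, which simply notes that the surjection $R_1\to S_1$ extends to a morphism of the corresponding exterior algebras; you have just made the extension explicit by choosing a basis of $R_1$ adapted to $\ker(R_1\to S_1)$. The basis choice is not strictly necessary — the exterior algebra construction is functorial in module maps of arbitrary (and unequal) ranks over the ring map $\phi$, so one may directly take $\bigwedge$ of $R\otimes_k R_1\to S\otimes_k S_1$ — but your bookkeeping is a perfectly valid way to realize the same map.
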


\begin{proof}
Any graded $k$-algebra morphism is equivalently determined by the data of a morphism of vector spaces $R_1 \to S_1$, and this map can be extended to a morphism of algebras on the corresponding exterior algebras.
\end{proof}

\begin{setup}\label{setup:deformationSetup}
Let $R$ and $S$ be standard-graded polynomial rings over a fixed field $k$, and let $\phi : R \to S$ denote any graded surjective morphism of $k$-algebras with the property that $\mfa := \ker \phi$ is a complete intersection.  
\end{setup}

\begin{remark}
Notice that the assumption that $\phi : R \to S$ is a graded morphism of $k$-algebras in Setup \ref{setup:deformationSetup} implies that $\mfa$ is generated by linear forms.
\end{remark}

\begin{theorem}\label{thm:golodAndDeformation}
Adopt notation and hypotheses as in Setup \ref{setup:deformationSetup}. Let $I \subset R$ be any ideal such that $\mfa$ is regular on $R/I$. Then $I$ is Golod if and only if $\phi (I)$ is Golod.
\end{theorem}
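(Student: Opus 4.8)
The plan is to realize the quotient rings $R/I$ and $S/\phi(I)$ via Koszul complexes and then apply the transfer results of Section~\ref{sec:backgroundAndTrivialMassey}, once in each direction. Write $S = R/\mfa$ with $\mfa = (\ell_1,\dots,\ell_c)$ a regular sequence of linear forms, and set $J = \phi(I) = (I+\mfa)/\mfa \subset S$. Since $\mfa$ is regular on $R/I$, the images $\bar\ell_1,\dots,\bar\ell_c$ form a regular sequence on $R/I$, and $(R/I)/\mfa(R/I) \cong S/J$. The first step is to produce a morphism of DG-algebras relating the Koszul complexes $K^{R/I}$ and $K^{S/J}$ (equivalently, the Koszul complexes $K^R$ and $K^S$ tensored appropriately, or the Koszul homology algebras computing $\operatorname{Tor}^R(R/I,k)$ and $\operatorname{Tor}^S(S/J,k)$). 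The natural candidate is the map induced by $\phi$ on Koszul complexes from Observation~\ref{obs:indSurjKosz}, restricted/descended to the quotients; I would package this as a morphism $\psi : A \to B$ of DG-algebras where $A$ computes the Koszul homology of $R/I$ and $B$ that of $S/J$.

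The key homological input is that, because $\mfa$ is a complete intersection of linear forms that is regular on $R/I$, passing from $R/I$ to $S/J = (R/I)/\mfa(R/I)$ is a ``Golod-neutral'' quotient by a regular sequence of elements in the square of the maximal ideal's... more precisely, one has the standard change-of-rings isomorphism $\operatorname{Tor}^{S}(S/J, k) \cong \operatorname{Tor}^{R}(R/I, k)$ as graded algebras, since $\mfa$ is generated by part of a minimal generating set of the maximal ideal of $R$ and is regular on $R/I$ (the Koszul complex $K^R \otimes_R R/I$ decomposes as $K^{R/\mfa} \otimes \text{(exterior algebra on the $\ell_i$)} \otimes \dots$, and the regularity of $\mfa$ on $R/I$ kills the extra exterior generators in homology). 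I would make this precise to conclude that $\psi_*: H_\bullet(A) \to H_\bullet(B)$ is an isomorphism of graded $k$-algebras, and moreover that $\psi$ is a surjective quasi-isomorphism of complexes (surjectivity coming from Observation~\ref{obs:indSurjKosz}). Granting this, Proposition~\ref{prop:qisoandSurj} shows $\psi$ induces a surjection on cycles.

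Now both directions follow formally. If $\phi(I) = J$ is Golod, then $B = K^{S/J}$ admits a trivial Massey operation on some basis; choosing the basis $\cat{B}^A$ of $H_\bullet(A)$ and its image $\cat{B}^B = \psi(\cat{B}^A)$ (a basis since $\psi_*$ is an isomorphism), Theorem~\ref{thm:liftingMasseyToDeform} applies — $\psi$ induces a surjection on cycles and an injection on homology, and $\psi(\cat{B}^A) \subset \cat{B}^B$ — yielding a trivial Massey operation on $A = K^{R/I}$, so $I$ is Golod. Conversely, if $I$ is Golod, then $A$ admits a trivial Massey operation, and since $\psi$ is a quasi-isomorphism of DG-algebras, Proposition~\ref{prop:specializeMassey} pushes it forward to a trivial Massey operation on $B$, so $\phi(I)$ is Golod.

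The main obstacle is the middle step: establishing that the map induced by $\phi$ on the relevant Koszul complexes (after tensoring with $R/I$ and $S/J$ respectively) is genuinely a surjective quasi-isomorphism of \emph{DG-algebras}, not merely that the homology algebras abstractly agree. One must be careful about what DG-algebra $A$ to use so that $\psi$ is literally a chain-level algebra map and literally surjective; the cleanest route is probably to take $A = K^R \otimes_R R/I$ and $B = K^S \otimes_S S/J$, use Observation~\ref{obs:indSurjKosz} to get the surjective DG-algebra map, and then invoke regularity of $\mfa$ on $R/I$ to see the induced map on homology is bijective — using that $K^R \otimes_R R/I \simeq \big(K^{S}\otimes_{S}S/J\big)\otimes_k \Lambda$ where $\Lambda$ is an exterior algebra on $c$ exterior generators that become boundaries precisely because $\mfa$ is $R/I$-regular. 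Verifying this compatibility carefully, and checking that the homology of $K^R\otimes_R R/I$ really is the Koszul homology governing Golodness of $R/I$ (it is, since $R$ is a polynomial ring so $K^R$ is the Koszul complex on a minimal generating set of the maximal ideal), is where the real work lies.
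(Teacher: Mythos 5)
Your proposal is correct and follows essentially the same route as the paper: both directions are handled by showing the induced map $R/I \otimes_R K^R \to S/\phi(I) \otimes_S K^S$ is a surjective quasi-isomorphism of DG-algebras (surjectivity from Observation~\ref{obs:indSurjKosz}, quasi-isomorphism from regularity of $\mfa$ on $R/I$), then applying Proposition~\ref{prop:specializeMassey} for the forward implication and Proposition~\ref{prop:qisoandSurj} together with Theorem~\ref{thm:liftingMasseyToDeform} for the converse. The only difference is cosmetic: you spell out the Koszul-complex decomposition justifying the quasi-isomorphism, which the paper asserts without elaboration.
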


\begin{remark}
Theorem \ref{thm:golodAndDeformation} can also be deduced in the local case by considering the Poincar\'e series of the respective rings; see for instance \cite[Proposition 5.2.4]{avramov1998infinite}. One advantage of the argument given here is that the results of Section \ref{sec:backgroundAndTrivialMassey} tell one precisely how the Massey product is induced by the map $\phi$, which is useful for explicit computations.
\end{remark}

\begin{proof}
$\implies:$ Assume that $I$ is Golod. Since $\mfa$ is regular on $R/I$, the induced map $R/I \otimes_R K^R \to S/\phi(I) \otimes_S K^S$ is a quasi-isomorphism of DG-algebras, so $\phi(I)$ is Golod by Proposition \ref{prop:specializeMassey}.

$\impliedby:$ Assume that $\phi (I)$ is Golod. Since $\mfa$ is regular on $R/I$, the induced map $\widehat{\phi} : R/I \otimes_R K^R \to S/\phi(I) \otimes_S K^S$ is a quasi-isomorphism of DG-algebras, and Observation \ref{obs:indSurjKosz} implies that this induced map is also a surjection. Let $\cat{B}^{R/I}$ denote any $k$-basis for the homology algebra. Then $\phi ( \cat{B}^{R/I} )$ may be extended to a $k$-basis $\cat{B}^{S / \phi (I)}$ for $H_\bullet (S / \phi (I) \otimes_S K^S )$, and the Golodness assumption of $\phi (I)$ implies that there exists a trivial Massey operation with respect to $\cat{B}^{S / \phi (I)}$. Combining Proposition \ref{prop:qisoandSurj} with Theorem \ref{thm:liftingMasseyToDeform}, it follows that $I$ is Golod. 
\end{proof}

\begin{cor}
Let $\widetilde{I}$ denote the polarization of any monomial ideal $I$. Then $I$ is Golod if and only if $\widetilde{I}$ is Golod.
\end{cor}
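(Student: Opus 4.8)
The plan is to deduce this immediately from Theorem~\ref{thm:golodAndDeformation} by exhibiting the polarization setup as a special case of Setup~\ref{setup:deformationSetup}. First I would unwind the definitions: given a monomial ideal $I \subset S = k[x_1,\dots,x_n]$ with polarization $\widetilde{I} \subset \widetilde{S} = k[\check{X}_1,\dots,\check{X}_n]$, set $R := \widetilde{S}$ and take $\phi : \widetilde{S} \to S$ to be the graded surjective $k$-algebra map that sends every variable $x_{ij}$ to $x_i$. The kernel of $\phi$ is exactly the ideal generated by the variable differences $\sigma$ listed in the definition of polarization; since these are $\sum_i (m_i - 1)$ linearly independent linear forms, $\ker\phi$ is generated by a regular sequence of linear forms, hence a complete intersection. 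This verifies the hypotheses of Setup~\ref{setup:deformationSetup} with the roles $R = \widetilde{S}$, $S = S$, $\mfa = (\sigma)$.

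Next I would apply Theorem~\ref{thm:golodAndDeformation} with the ideal ``$I$'' of that theorem taken to be $\widetilde{I} \subset \widetilde{S}$. The theorem requires that $\mfa = (\sigma)$ be a regular sequence on $\widetilde{S}/\widetilde{I}$, which is precisely part of the definition of a polarization. The theorem then yields that $\widetilde{I}$ is Golod if and only if $\phi(\widetilde{I})$ is Golod. Finally, I would note that $\phi(\widetilde{I})\cdot S = \widetilde{I}\otimes_{\widetilde S} S/\sigma$, and the definition of polarization gives $\widetilde{I}\otimes_{\widetilde S}\widetilde S/\sigma \cong I$ as ideals of $S$; hence $\phi(\widetilde{I}) = I$ and the equivalence reads: $\widetilde{I}$ is Golod iff $I$ is Golod.

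I do not anticipate a real obstacle here — the corollary is genuinely a dictionary translation. The only point requiring a word of care is matching the isomorphism $\widetilde I \otimes \widetilde S/\sigma \cong I$ in the definition with the image $\phi(\widetilde I)$, i.e.\ checking that $\phi$ is exactly the quotient map $\widetilde S \twoheadrightarrow \widetilde S/\sigma$ followed by this isomorphism; this is immediate once one observes both maps send $x_{ij}\mapsto x_i$. If one wanted to be maximally explicit one could also remark that ``Golod ideal'' as defined means the quotient ring is Golod, so the statement is symmetric in the two rings $\widetilde S/\widetilde I$ and $S/I$ and no further bookkeeping about which ring we are quotienting is needed.
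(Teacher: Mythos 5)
Your proposal is correct and is exactly the argument the paper intends: the corollary is stated without proof immediately after Theorem \ref{thm:golodAndDeformation}, the point being precisely that the depolarization map $\widetilde S \to S$, $x_{ij}\mapsto x_i$, realizes Setup \ref{setup:deformationSetup} with $\mfa=(\sigma)$ a complete intersection of linear forms regular on $\widetilde S/\widetilde I$, and $\phi(\widetilde I)=I$. Your careful identification of $\phi(\widetilde I)$ with $\widetilde I\otimes\widetilde S/\sigma\cong I$ is the only bookkeeping needed, and you have it right.
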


\section{Fiber Invariant Gr\"obner Degenerations}\label{sec:fiberInvGrobner}

In this section we study one of the main motivating questions of this paper: to what extent can Golodness be transferred along an associated Gr\"obner degeneration? We first give the necessary background on Gr\"obner degenerations, and use this material to define the notion of \emph{fiber invariant} Betti numbers (with respect to a given term order). We then use results of Conca and Varbaro from \cite{conca2020square} to furnish some simple classes of ideals with fiber invariant Betti numbers. Finally, we prove that for an ideal $I$ with fiber invariant Betti numbers, Golodness of $I$ is equivalent to Golodness of the initial ideal $\inn_< (I)$. This result will be particularly useful in Section \ref{sec:golodnessSparseMatrices}.

The results on Gr\"obner degenerations are stated here without proof; for a more complete introduction to this material see \cite{green1998generic} and the references therein.

\begin{setup}\label{set:homogenizationSetup}
Let $R = k[x_1 , \dots , x_n]$ and fix a weight vector $w \in\mathbb{N}^{n}$ inducing a term order $<$. Define $R^h := R[t]$. Let $f \in R$ and write
\[f=\underset{1\leq i\leq l}{\sum}c_ix^{\mathbf{a}_i}\]
with $w( \aa_1) \geq w(\aa_2) \geq \cdots \geq w(\aa_l)$. Define the homogenization of $f$ with respect to $w$ as
$$f^h := t^{w(\aa_1)} \cdot f(t^{-w_1} x_1 , \dots , t^{-w_n} x_n).$$
For any homogenenous ideal $I \subset R$, define the homogenization of $I$ via
$$I^h := \{ f^h \mid f \in I \}.$$
\end{setup}

\begin{obs}\label{obs:2gradings}
Adopt notation and hypotheses as in Setup \ref{set:homogenizationSetup}. If $I \subset R$ is a homogeneous ideal, then $I^h$ is homogeneous with respect to the following gradings:
\begin{enumerate}
    \item $\deg (x_i) = 1$ for $1 \leq i \leq n$ and $\deg (t) = 0$, or
    \item $\deg (x_i) = w_i$ for $1 \leq i \leq n$ and $\deg (t) = 1$. 
\end{enumerate}
\end{obs}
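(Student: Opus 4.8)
The plan is to make the definition of $f^h$ fully explicit and then read off each of the two homogeneity statements by inspection, invoking the hypothesis that $I$ is homogeneous only when treating grading (1). The starting point is the observation that the substitution $x_j \mapsto t^{-w_j}x_j$ sends a monomial $x^{\aa_i}$ to $t^{-w(\aa_i)}x^{\aa_i}$, so that for $f = \sum_{i=1}^l c_i x^{\aa_i}$ with $w(\aa_1) \geq w(\aa_2) \geq \cdots \geq w(\aa_l)$ one obtains
\[ f^h \;=\; \sum_{i=1}^{l} c_i\, t^{\,w(\aa_1) - w(\aa_i)}\, x^{\aa_i}, \]
a genuine element of $R^h = R[t]$ since every exponent $w(\aa_1) - w(\aa_i)$ is nonnegative. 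For grading (2) ($\deg x_j = w_j$, $\deg t = 1$), each monomial $t^{\,w(\aa_1) - w(\aa_i)} x^{\aa_i}$ occurring in $f^h$ has degree $\bigl(w(\aa_1) - w(\aa_i)\bigr) + w(\aa_i) = w(\aa_1)$, independent of $i$, so $f^h$ is homogeneous of degree $w(\aa_1)$. Since $I^h$ is the ideal of $R^h$ generated by the elements $f^h$, $f \in I$, it is then generated by homogeneous elements for this grading, hence is a homogeneous ideal; this part uses nothing about $I$.

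For grading (1) ($\deg x_j = 1$, $\deg t = 0$), I would bring in homogeneity of $I$ with respect to the standard grading. Given $f \in I$, decompose $f = \sum_d f_d$ into its standard-degree-$d$ components, each of which again lies in $I$. Grouping the terms of the displayed formula for $f^h$ by total $x$-degree and comparing with the same formula applied to $f_d$ yields $(f^h)_d = t^{\,m_d}(f_d)^h$, where $m_d \geq 0$ is the difference between the top weight occurring in $f$ and the top weight occurring in $f_d$. Hence every generator $f^h$ of $I^h$ lies in the ideal generated by $\{(f_d)^h\}_d$, so $I^h$ is in fact generated by $\{\,g^h : g \in I \text{ homogeneous}\,\}$; and for homogeneous $g$ of standard degree $d$ every monomial of $g^h$ has $x$-degree $d$, so $g^h$ is homogeneous of degree $d$ for grading (1). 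Thus $I^h$ is again a homogeneous ideal.

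I do not anticipate a genuine obstacle: the argument is a direct unwinding of the definitions. The only points meriting a line of care are the bookkeeping identity $(f^h)_d = t^{m_d}(f_d)^h$, and the accompanying (standard) remark that $I^h$ must be read as the ideal generated by $\{f^h : f \in I\}$ — that set is not itself closed under addition, and only becomes stable under passage to homogeneous components once one forms the generated ideal.
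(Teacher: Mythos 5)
Your proof is correct: the explicit formula $f^h=\sum_i c_i\,t^{\,w(\aa_1)-w(\aa_i)}x^{\aa_i}$ immediately gives homogeneity for grading (2) with no hypothesis on $I$, and the bookkeeping identity $(f^h)_d=t^{m_d}(f_d)^h$ (valid with $m_d=w(\aa_1)-\max\{w(\aa_i):|\aa_i|=d\}\geq 0$) correctly reduces grading (1) to the homogeneity of $I$. The paper states this observation without proof, deferring to standard references, and your direct unwinding of the definitions is exactly the intended argument, including the remark that $I^h$ must be read as the ideal generated by $\{f^h: f\in I\}$.
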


By convention, we will always think of a homogenized ideal $I^h$ as being graded with respect to the grading $(2)$ of Observation \ref{obs:2gradings}. The following proposition is a fundamental result in the study of initial ideals; it says that $I$ and $\inn_< I$ can be connected via a flat family.

\begin{prop}
Adopt notation and hypotheses as in Setup \ref{set:homogenizationSetup}. If $G$ is a $w$-Gr\"{o}bner basis of $I$, then:
\begin{enumerate}[(a)]
    \item $I^h = (g^h \mid g \in G)$, and
    \item $R^h / I^h$ is flat over $k[t]$. 
\end{enumerate}
In particular,
$$R^h / I^h \otimes R^h/(t) = R / \inn_w (I) \quad \textrm{and} \quad R^h / I^h \otimes R^h/(t-1) = R/I.$$
\end{prop}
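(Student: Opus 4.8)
The plan is to realise $R^h/I^h$ as a free $k[t]$-module with an explicit monomial basis, from which flatness and the two fibre identifications drop out by inspection, and simultaneously to see that the ideal $J := (g^h \mid g \in G)$ is already large enough to produce that basis --- which is precisely statement (a). The groundwork is the elementary dictionary between $R$ and $R^h = R[t]$: dehomogenisation $\mathrm{deh}\colon R^h \to R$, $t \mapsto 1$, is a surjective ring homomorphism with $\ker\mathrm{deh} = (t-1)$ and $\mathrm{deh}(f^h) = f$; the substitution $t \mapsto 0$ sends $f^h$ to the $w$-initial form $\inn_w(f)$; and $I^h$, $J$ are homogeneous for the grading $(2)$ of Observation \ref{obs:2gradings}. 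In particular $\mathrm{deh}(J) = (g \mid g \in G) = I$, and reduction modulo $t$ sends the generators $g^h$ of $J$ to the monomials $\inn_w(g)$.

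The technical heart is the claim that the \emph{standard monomials} of $\inn_w(I)$ --- the monomials $x^b \notin \inn_w(I)$ --- descend to a $k[t]$-basis of $R^h/J$. For spanning I would run the division algorithm inside $R^h$ with respect to a monomial order $\succ$ on $k[x_1,\dots,x_n,t]$ arranged so that $\inn_\succ(g^h) = \inn_w(g)$ for every $g \in G$ (compare the $x$-parts of monomials first, break ties by $t$-exponent); dividing a homogeneous element by $\{g^h \mid g \in G\}$ leaves a homogeneous remainder whose monomials have $x$-parts divisible by none of the $\inn_w(g)$, hence lying outside $\inn_w(I)$ by the Gröbner hypothesis, so that remainder is a $k[t]$-combination of standard monomials representing the same class modulo $J$. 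For independence, a homogeneous element of $J$ supported on standard monomials has the form $\sum_b c_b\, t^{e_b} x^b$; applying $\mathrm{deh}$ puts $\sum_b c_b x^b \in \mathrm{deh}(J) = I$, and since the standard monomials of $\inn_w(I)$ are $k$-linearly independent modulo $I$ (Macaulay), every $c_b$ vanishes.

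The rest is short. Statement (b) is immediate, since $R^h/I^h = R^h/J$ has a $k[t]$-basis, hence is free and so flat over $k[t]$. For statement (a): given $f \in I$, reduce $f^h$ to its (homogeneous) remainder $r$, so $f^h - r \in J$; then $\mathrm{deh}(r) = f - \mathrm{deh}(f^h - r) \in I$ is a $k$-combination of distinct standard monomials, hence $0$ by Macaulay, and homogeneity of $r$ then forces $r = 0$, so $f^h \in J$; together with the trivial inclusion $J \subseteq I^h$ this yields $I^h = J = (g^h \mid g \in G)$. For the fibres, reduction modulo $t$ carries the generators of $I^h$ to the $\inn_w(g)$, so $R^h/I^h \otimes_{R^h} R^h/(t) \cong R/(\inn_w(g) \mid g \in G) = R/\inn_w(I)$ by the Gröbner hypothesis, while reduction modulo $t-1$ (namely $\mathrm{deh}$) carries them to the $g$, so $R^h/I^h \otimes_{R^h} R^h/(t-1) \cong R/(g \mid g \in G) = R/I$.

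I expect the real work to be concentrated in the spanning half of the basis claim: exhibiting a genuine (well-founded and multiplicative) monomial order on $R^h$ under which $\inn_\succ(g^h)$ is the pure $x$-monomial $\inn_w(g)$, and verifying that division by the $g^h$ terminates and preserves homogeneity. Once the $R$--$R^h$ dictionary (homogenise, dehomogenise, set $t = 0$) is firmly in place, everything else is bookkeeping; the one external fact I would cite rather than reprove is Macaulay's theorem that the standard monomials of an initial ideal form a $k$-basis of the quotient ring.
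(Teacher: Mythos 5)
The paper states this proposition without proof, deferring to \cite{green1998generic} and the references therein, so there is no in-paper argument to compare against. Your proof is the standard one (as in Eisenbud, Ch.~15, or Green's notes): realize the standard monomials of $\inn_w(I)$ as a $k[t]$-basis of $R^h/(g^h \mid g \in G)$ by dividing under a block order with the $x$-variables dominant, deduce freeness (hence flatness) and the equality of that ideal with $I^h$ via dehomogenization plus Macaulay's theorem, and read off the two fibres from the generators; this is correct and complete. The only point worth making explicit is one the paper itself glosses over: you must use that $w$ induces the term order $<$ (refining ties if necessary), so that $\inn_w(g)$ is a single monomial, your order $\succ$ (compare $x$-parts by $<$, then $t$-exponent with $1 \prec t$) is a genuine monomial order satisfying $\inn_\succ(g^h) = \inn_<(g)$, and the Gr\"obner hypothesis identifies the irreducible monomials with the standard monomials of $\inn_<(I)$.
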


The flatness of $R^h / I^h$ implies that free resolutions over $R^h$ will descend to free resolutions over $R$. This fact is made precise in the following:

\begin{cor}\label{cor:flatResolution}
Adopt notation and hypotheses as in Setup \ref{set:homogenizationSetup}. Let
\begin{align*}
    F: \quad \cdots \to F_i^h \to \cdots \to F^h_1 \to R^h \to 0.
\end{align*}
be a minimal free resolution of $R^h / I^h$. Then this resolution descends to:
\begin{enumerate}
    \item A minimal $R$-free resolution of $R/in_w(I)$, by applying the functor $- \otimes_{k[t]} k[t]/(t)$.
    \item A (possibly non-minimal) $R$-free resolution of $R/I$, by applying the functor $- \otimes_{k[t]} k[t]/(t-1)$
\end{enumerate}
In particular, one has
$$\beta_{ij}^{R^h} (I^h) = \beta_{ij}^R (\inn_w (I)) \quad \textrm{for all} \ i,j.$$
\end{cor}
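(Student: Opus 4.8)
The plan is to deduce everything from the flatness of $R^h/I^h$ over $k[t]$ (the preceding proposition) together with the observation that each $F^h_i$, being free over $R^h = R[t]$, is in particular a free—hence flat—$k[t]$-module. Write $M := R^h/I^h$. Since $k[t]$ is a PID, flatness of $M$ is the same as torsion-freeness, so multiplication by $t$ and by $t-1$ is injective on $M$. I would then tensor the complex $F$ with the short exact sequence $0 \to k[t] \xrightarrow{t} k[t] \to k[t]/(t) \to 0$; because each $F^h_i$ is $k[t]$-flat this produces a short exact sequence of complexes
\[
0 \longrightarrow F \xrightarrow{\,t\,} F \longrightarrow F \otimes_{k[t]} k[t]/(t) \longrightarrow 0 .
\]
The long exact homology sequence, using $H_i(F) = 0$ for $i \geq 1$ and the injectivity of $t$ on $H_0(F) = M$, shows $H_i\big(F \otimes_{k[t]} k[t]/(t)\big) = 0$ for $i \geq 1$ and $H_0 = M/tM$. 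Since $F^h_i \otimes_{k[t]} k[t]/(t) = F^h_i \otimes_{R^h} R^h/(t)$ is free over $R^h/(t) = R$, and $M/tM = R/\inn_w(I)$ by the preceding proposition, this is exactly part (1); part (2) is the identical argument with $t$ replaced by $t-1$, giving a free resolution of $M/(t-1)M = R/I$ over $R^h/(t-1) = R$.

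For the minimality claims I would argue on the differential matrices of $F$. Because $I^h$ is bihomogeneous for the two gradings of Observation \ref{obs:2gradings} and $F$ is a minimal bigraded free resolution, every entry of every differential of $F$ is a bihomogeneous element of $R^h$ of nonzero bidegree, hence lies in $(x_1,\dots,x_n,t)$. Reducing modulo $t$ carries these entries into $(x_1,\dots,x_n)$, the maximal ideal of $R$, so $F \otimes_{k[t]} k[t]/(t)$ is still minimal; this gives $\beta^{R^h}_{ij}(R^h/I^h) = \beta^R_{ij}(R/\inn_w(I))$, and the stated equality $\beta^{R^h}_{ij}(I^h) = \beta^R_{ij}(\inn_w(I))$ follows by the standard shift relating the Betti numbers of an ideal to those of the corresponding cyclic quotient. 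Reducing modulo $t-1$, by contrast, can turn an entry equal to $t$ into the unit $1$, which is exactly why the resolution of $R/I$ in (2) need not be minimal.

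I do not expect a serious obstacle: this is the standard fact that free resolutions propagate through a flat family. The only points requiring care are (i) interpreting ``minimal'' via the correct bigrading, so that constant entries are genuinely excluded and the descent modulo $t$ really is minimal, and (ii) the elementary but easy-to-misindex translation between the Betti numbers of the ideals $I^h$, $\inn_w(I)$ and those of the cyclic quotients $R^h/I^h$, $R/\inn_w(I)$.
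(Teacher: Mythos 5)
Your proof is correct. Note that the paper deliberately states this corollary without proof (the surrounding text says the results on Gr\"obner degenerations are ``stated here without proof,'' with a pointer to the literature), so there is no in-paper argument to compare against; what you give is the standard one. The two load-bearing points --- acyclicity of the specialized complex via the long exact sequence coming from the short exact sequence $0 \to F \xrightarrow{\,t\,} F \to F \otimes_{k[t]} k[t]/(t) \to 0$ (valid because the $F_i^h$ are $k[t]$-free and $t$, resp.\ $t-1$, acts injectively on $H_0(F) = R^h/I^h$ by flatness over the PID $k[t]$), and minimality mod $t$ via bihomogeneity of the differential entries in the sense of Observation \ref{obs:2gradings} --- are exactly the right ones, and your remark that reduction mod $t-1$ can turn an entry $t$ into a unit correctly explains why part (2) carries the ``possibly non-minimal'' caveat. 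The only point I would make explicit is that minimality of $F$ over $R^h$ means the entries lie in the irrelevant ideal $(x_1,\dots,x_n,t)$, which for a bihomogeneous entry is equivalent to having nonzero bidegree; with that said, your reduction-mod-$t$ argument closes cleanly.
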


The next proposition is an evident observation resulting from Corollary 
\ref{cor:flatResolution}; we state it explicitly here because it will be an essential ingredient for later proofs.

\begin{prop}\label{prop:bnosandminimality}
Let $G$ denote a reduced Gr\"obner basis of an ideal $I$, and let $F^h$ denote the homogeneous minimal free resolution of $I^h$. Then $\beta_{ij} (\inn_< I) = \beta_{ij} (I)$ if and only if $F^h \otimes k[t]/(t-1)$ is a minimal complex.
\end{prop}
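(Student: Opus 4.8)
The plan is to reduce the statement entirely to Corollary \ref{cor:flatResolution} together with the standard criterion characterizing minimality of a graded free resolution in terms of ranks versus Betti numbers. The proposition is, in effect, just a repackaging of that corollary, so the proof should be short.

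First I would record the engine of the argument: if $\mathbb{F}$ is any graded free resolution of a finitely generated graded module $M$ over the (standard-graded) polynomial ring $R$, then $\operatorname{rank}_j \mathbb{F}_i \geq \beta_{ij}(M)$ for every $i,j$, with equality for all $i,j$ if and only if $\mathbb{F}$ is minimal. One direction is immediate from $\beta_{ij}(M) = \dim_k H_i(\mathbb{F}\otimes_R k)_j$: when $\mathbb{F}$ is minimal the differentials vanish after $-\otimes_R k$, so $H_i(\mathbb{F}\otimes_R k)_j = (\mathbb{F}_i\otimes_R k)_j$ has dimension $\operatorname{rank}_j\mathbb{F}_i$. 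For the converse, a non-minimal graded free resolution splits off a trivial direct-summand subcomplex of the shape $\cdots 0\to R(-a)\xrightarrow{1}R(-a)\to 0\cdots$, leaving a smaller free resolution of $M$; this strictly decreases $\operatorname{rank}_j\mathbb{F}_i$ for some $i,j$ while the inequality $\operatorname{rank}_j\mathbb{F}_i\geq\beta_{ij}(M)$ is preserved, so equality everywhere forces minimality.

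Next I would apply Corollary \ref{cor:flatResolution}. Part (2) says $\mathbb{F}:=F^h\otimes_{k[t]}k[t]/(t-1)$ is a graded free resolution of $R/I$, and since $R^h\otimes_{k[t]}k[t]/(t-1)\cong R$ respects the grading of Observation \ref{obs:2gradings}(1), the free module in homological degree $i$ and internal degree $j$ has rank equal to that of the corresponding free module of $F^h$. By minimality of $F^h$ and the displayed equality in Corollary \ref{cor:flatResolution}, that rank is exactly $\beta_{ij}(\inn_< I)$. Feeding $\mathbb{F}$ and $M=R/I$ into the general fact: $\mathbb{F}$ has free module ranks $\beta_{ij}(\inn_< I)$ and resolves $R/I$, hence it is minimal if and only if $\beta_{ij}(\inn_< I)=\beta_{ij}(I)$ for all $i,j$ (with the same convention relating the Betti numbers of an ideal to those of its quotient ring used on both sides). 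This is precisely the asserted equivalence.

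I do not expect a genuine obstacle here; the only care needed is bookkeeping. One must check that specializing $F^h$ at $t=1$ really does preserve the graded ranks of the free modules — this uses the two gradings of Observation \ref{obs:2gradings} and the flatness of $R^h/I^h$ over $k[t]$ (so that Corollary \ref{cor:flatResolution}(2) applies and the specialized complex stays exact) — and one must keep the homological-degree shift in the definition of $\beta_{ij}$ of an ideal consistent between $I$ and $\inn_< I$. Neither is a real difficulty.
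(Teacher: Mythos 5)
Your proposal is correct and is exactly the intended argument: the paper offers no proof at all, merely calling the proposition ``an evident observation resulting from Corollary \ref{cor:flatResolution},'' and your write-up (graded ranks of $F^h\otimes k[t]/(t-1)$ equal $\beta_{ij}(\inn_< I)$ by minimality of $F^h$ and Corollary \ref{cor:flatResolution}, then apply the standard rank-versus-Betti-number criterion for minimality) supplies precisely the details being elided. The bookkeeping points you flag --- tracking the grading of Observation \ref{obs:2gradings}(1), under which $t-1$ is homogeneous of degree zero --- are the right ones and pose no obstacle.
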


The following definition introduces a key notion that will allow us to transfer Golodness along associated Gr\"obner degenerations.

\begin{definition}\label{def:fiberInv}
Let $R$ be a standard graded polynomial ring over a field $k$ endowed with some term order $<$. A homogeneous ideal $I$ is said to have \emph{fiber invariant} Betti numbers (with respect to $<$) if
$$\beta_{ij} (R/I) = \beta_{ij} ( R / \inn_< I) \quad \textrm{for all} \ i,j.$$
\end{definition}

It is worth noting that the property $\beta_{ij} (R/I) = \beta_{ij} ( R / \inn_< I)$ for all $i, \ j$ at least implies that $I$ has a minimal generating set that forms a Gr\"obner basis, which is already an uncommon property. That being said, the following theorem will give us a sufficiently abundant source of examples of ideals with fiber invariant Betti numbers; in particular, such ideals exist (the fact that such ideals exist has already been known in the literature, see for example \cite{boocher2012free} and \cite{mohammadi2014divisors}). 

\begin{theorem}\label{thm:fiberInvLinear}
Assume $I$ is a homogeneous ideal with linear minimal free resolution. The ideal $I$ has fiber invariant Betti numbers in each of the following cases:
\begin{enumerate}
    \item the initial ideal $\inn_< (I)$ has linear resolution, or
    \item the initial ideal $\inn_< (I)$ is squarefree.
\end{enumerate}
\end{theorem}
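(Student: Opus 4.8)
The plan is to exploit the general principle that, for homogeneous ideals, passing to an initial ideal can only increase Betti numbers: one always has $\beta_{ij}^R(R/I) \leq \beta_{ij}^R(R/\inn_<(I))$ for all $i,j$. This is the standard upper-semicontinuity of Betti numbers in the flat family of Corollary \ref{cor:flatResolution}, since tensoring the minimal $R^h$-resolution of $R^h/I^h$ with $k[t]/(t-1)$ yields a (possibly non-minimal) resolution of $R/I$, so $\beta_{ij}^R(R/I) \leq \beta_{ij}^{R^h}(R^h/I^h) = \beta_{ij}^R(R/\inn_<(I))$. Thus in both cases the task reduces to proving the reverse inequality, i.e.\ $\beta_{ij}^R(R/\inn_<(I)) \leq \beta_{ij}^R(R/I)$; equivalently, by Proposition \ref{prop:bnosandminimality}, that the complex $F^h \otimes k[t]/(t-1)$ is already minimal.

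For case (1), the key observation is that since $I$ has a linear resolution it is generated in a single degree, say $d$, and since $\inn_<(I)$ has the same Hilbert function and also a linear resolution, it too is generated in degree $d$ with $\beta_{i,i+d}$ the only nonzero graded Betti numbers (and $\beta_{0,d}(I) = \beta_{0,d}(\inn_<(I))$ since initial ideals preserve the number of generators in each degree when the generators form a Gr\"obner basis — which one gets a posteriori, or one argues directly with Hilbert series). The cleanest route is a Hilbert-series / numerical argument: the graded Betti numbers of a module with $d$-linear resolution are completely determined by its Hilbert series, because the resolution is concentrated on a single diagonal, so the alternating sum $\sum_i (-1)^i \beta_{i,i+j}(M) z^{i}$ is forced. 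Since $R/I$ and $R/\inn_<(I)$ have equal Hilbert functions and both have linear (hence diagonal) resolutions, their graded Betti tables must coincide. I would phrase this via the identity $H_{R/I}(z) = \big(\sum_{i,j}(-1)^i \beta_{ij}(R/I) z^j\big)/(1-z)^n$ and note that on a single diagonal the numerator determines each $\beta$ individually.

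For case (2), where $\inn_<(I)$ is squarefree, the plan is to invoke the result of Conca and Varbaro referenced in the excerpt (\cite{conca2020square}): when $\inn_<(I)$ is a squarefree monomial ideal, one has $\depth(R/I) = \depth(R/\inn_<(I))$, and more to the point, the extremal Betti numbers — or in fact here, combined with the hypothesis that $I$ has a linear resolution — the full Betti tables agree. Concretely, $I$ having a linear resolution means $\reg(R/I) = d - 1$ where $d$ is the generator degree; by Conca–Varbaro the regularity is preserved, $\reg(R/\inn_<(I)) = \reg(R/I) = d-1$, and $\inn_<(I)$ is generated in degree $d$, so $\inn_<(I)$ also has a $d$-linear resolution; now case (1) applies. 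So case (2) is essentially reduced to case (1) via the Conca–Varbaro regularity result, plus the observation that a squarefree monomial ideal generated in degree $d$ with $\reg = d-1$ has a linear resolution.

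\textbf{The main obstacle} I anticipate is not any single step but pinning down exactly which statement of \cite{conca2020square} is being invoked and making sure the reduction in case (2) is airtight — in particular verifying that $\inn_<(I)$ is generated in a single degree equal to that of $I$ (so that "regularity $d-1$" genuinely forces a linear resolution rather than merely bounding it), and handling the bookkeeping that the number of minimal generators in each degree is preserved. The linear-algebra/Hilbert-series core of case (1) is routine; the care needed is entirely in correctly citing and applying the squarefree-degeneration machinery for case (2).
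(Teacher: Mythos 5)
Your proposal is correct and has the same overall architecture as the paper's proof: case (2) is reduced to case (1) by using Conca--Varbaro (\cite[Theorem 1.2]{conca2020square}) to transfer the linear-resolution property across the squarefree degeneration, and case (1) is settled by a rigidity argument for linear Betti tables. The one genuine difference is the mechanism in case (1): the paper invokes Peeva's theorem (\cite{peeva2004consecutive}) that the Betti table of $I$ is obtained from that of $\inn_<(I)$ by consecutive cancellations, and observes that a table concentrated on a single diagonal admits no such cancellation; you instead argue that a $d$-linear Betti table is determined by the Hilbert series, which is preserved under Gr\"obner degeneration. Both are valid and essentially equivalent in substance, but the consecutive-cancellation route is a bit more economical: it needs no hypothesis on $I$ itself in case (1) (equality of the tables, hence linearity of the resolution of $I$, falls out automatically), whereas your Hilbert-series route requires first verifying that both resolutions are linear in the same degree $d$ --- the generator-degree bookkeeping you correctly flag and resolve via the semicontinuity inequality $\beta_{ij}(R/I)\leq\beta_{ij}(R/\inn_<(I))$. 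Your treatment of case (2), including the step that regularity $d-1$ plus generation in degree $d$ forces $\inn_<(I)$ to have a $d$-linear resolution, is exactly the intended content of the paper's (tersely stated) reduction to case (1).
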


\begin{proof}
\textbf{Proof of $(1):$} If $\inn_< (I)$ has linear resolution, then there are no nontrivial consecutive cancellations (see \cite{peeva2004consecutive}) in the Betti table of $\inn_< (I)$, so the Betti tables must coincide.

\textbf{Proof of $(2)$:} If $\inn_< (I)$ is squarefree, then $I$ also has linear resolution by \cite[Theorem 1.2]{conca2020square}. The proof then follows by $(1)$. 
\end{proof}

The following theorem shows that Golodness may be transferred along Gr\"obner degenerations associated to ideals with fiber invariant Betti numbers.

\begin{theorem}\label{thm:golodForFiberInv}
Let $I$ be any ideal with fiber invariant Betti numbers with respect to some term order $<$. Then,
$$R/I \ \textrm{is Golod} \quad \iff \quad R/\inn_< (I) \ \textrm{is Golod}.$$
\end{theorem}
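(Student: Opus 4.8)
The plan is to use the homogenization $I^h \subset R^h = R[t]$ as a bridge between $R/I$ and $R/\inn_<(I)$, and to transfer Golodness in two stages: first between $R/\inn_<(I)$ and $R^h/I^h$, then between $R^h/I^h$ and $R/I$. The key observation is that for an ideal with fiber invariant Betti numbers, Proposition \ref{prop:bnosandminimality} tells us that the minimal free resolution $F^h$ of $R^h/I^h$ specializes (via $-\otimes_{k[t]} k[t]/(t-1)$) to a \emph{minimal} $R$-free resolution of $R/I$. Since $t-1$ is a nonzerodivisor on $R^h/I^h$ by flatness (the Proposition preceding Corollary \ref{cor:flatResolution}), this says precisely that the image of $t-1$ in $R^h$, which is a single linear form generating a complete intersection, is regular on $R^h/I^h$ and that passing to the quotient preserves minimality of resolutions — i.e. the associated Koszul homology does not change. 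Dually, $t$ is regular on $R^h/I^h$ and $-\otimes k[t]/(t)$ already gives a minimal resolution of $R/\inn_<(I)$ by Corollary \ref{cor:flatResolution}.

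First I would set up the regular-sequence statements carefully: by flatness of $R^h/I^h$ over $k[t]$, both $t$ and $t-1$ are nonzerodivisors on $R^h/I^h$. Then the quotient maps $R^h \to R^h/(t) \cong R$ and $R^h \to R^h/(t-1)\cong R$ are graded surjections of polynomial rings (after identifying the quotient with $R$) whose kernels are complete intersections generated by a single linear form, so they fit the hypotheses of Setup \ref{setup:deformationSetup}. Applying Theorem \ref{thm:golodAndDeformation} with the morphism $R^h \to R^h/(t)$ and the ideal $I^h$: since $t$ is regular on $R^h/I^h$, we conclude $I^h$ is Golod (as an ideal in $R^h$) if and only if its image $\inn_<(I)$ is Golod in $R$. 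This handles one half without using fiber invariance.

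The second, and genuinely delicate, step is to apply Theorem \ref{thm:golodAndDeformation} to the morphism $R^h \to R^h/(t-1) \cong R$ with ideal $I^h$, which requires that $t-1$ be regular on $R^h/I^h$ — which we have — and gives $I^h$ Golod in $R^h$ $\iff$ $I$ Golod in $R$. Stringing the two equivalences together yields $R/I$ Golod $\iff R^h/I^h$ Golod $\iff R/\inn_<(I)$ Golod, as desired. The main obstacle I anticipate is a subtle point about gradings and local-versus-graded Golodness: Setup \ref{setup:deformationSetup} and Theorem \ref{thm:golodAndDeformation} are stated for \emph{standard-graded} polynomial rings, whereas $R^h$ with the grading $\deg(x_i) = w_i$, $\deg(t)=1$ from Observation \ref{obs:2gradings} is only positively $\bbz$-graded, not standard graded; one must check that Theorem \ref{thm:golodAndDeformation} (or rather the Koszul-complex/trivial-Massey machinery underlying it) applies in this more general graded setting, or else pass to the localization at the irrelevant maximal ideal and invoke the local version — and this is exactly where fiber invariance is truly needed, to guarantee via Proposition \ref{prop:bnosandminimality} that the relevant induced map of Koszul complexes $R^h/I^h \otimes K^{R^h} \to R/I \otimes K^R$ is a quasi-isomorphism of DG-algebras (the non-minimality worry in Corollary \ref{cor:flatResolution}(2) is precisely what fiber invariance rules out, ensuring the Koszul homology algebras match so that Propositions \ref{prop:specializeMassey} and \ref{prop:qisoandSurj} plus Theorem \ref{thm:liftingMasseyToDeform} go through in both directions).
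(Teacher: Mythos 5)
Your proposal is correct and, once you strip away the initial attempt to route everything through Theorem \ref{thm:golodAndDeformation} (whose standard-graded hypotheses indeed fail for $R^h$ and for the non-homogeneous element $t-1$, as you note), it lands exactly on the paper's argument: fiber invariance forces the specialization maps $R^h/I^h \otimes K \to R/I^a \otimes K$ (for $a=0,1$) to be surjective quasi-isomorphisms of DG-algebras, so Propositions \ref{prop:qisoandSurj} and \ref{prop:specializeMassey} together with Theorem \ref{thm:liftingMasseyToDeform} transfer trivial Massey operations in both directions through the common bridge $R^h/I^h$. Your observation that the $t=0$ half needs no fiber invariance (minimality always descends to the initial-ideal fiber) is a correct refinement that the paper leaves implicit.
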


\begin{proof}
Let $I^h \subset R^h$ denote the associated homogenization with respect to the term order $<$, and let $I^a$ denote $I^h \otimes_{k[t]} k[t]/(t-a)$. The assumption of fiber invariance is equivalent to the statement that there is an isomorphism of $R$ (or $k$)-modules $\tor_\bullet^R (R^h/I^h , k) \cong F^h \otimes_R k \cong  \tor_\bullet^R (R/I^a , k)$ for all $a \in k$, and this isomorphism is furthermore an isomorphism of algebras. This implies that the induced map $R^h / I^h \otimes_R K \to R / I^a \otimes_R K$ is a surjective quasi-isomorphism of DG-algebras, whence $I^h$ is Golod as an $R$-module if and only if $I^a$ is Golod for some $a \in k$ by Proposition \ref{prop:qisoandSurj} and Theorem \ref{thm:liftingMasseyToDeform}. Employing Proposition \ref{prop:specializeMassey}, this is in fact equivalent to $I^a$ being Golod for \emph{all} $a \in k$.
\end{proof}

The standard inequality $\beta_{ij} (R/I) \leq \beta_{ij} (R / \inn_< (I))$ may tempt one to believe that trivial homology algebra (or even Golodness) of $R/ \inn_< (I)$ implies trivial homology algebra/Golodness for $I$, but the following simple example shows that this is not true. 

\begin{example}
Let $R = k[x_1,x_2,x_3]$ and consider the ideal $$I = \left({x}_{1}^{2},{x}_{1}{x}_{3},-{x}_{1}{x}_{2}+{x}_{
        3}^{2},{x}_{2}{x}_{3},{x}_{2}^{2}\right).$$ 
        The ideal $I$ is a grade $3$ Gorenstein ideal and hence is not Golod. With respect to the standard lexicographic order on the variable order $x_1 > x_2 > x_3$, $I$ has initial ideal
        $$\inn_< (I) = \left({x}_{1}^{2},{x}_{1}{x}_{2},{x}_{2}^{2},{
        x}_{1}{x}_{3},{x}_{2}{x}_{3},{x}_{3}^{3}\right),$$
        and one can check using Macaulay2 that $\inn_< (I)$ \emph{is} Golod. In particular, having a Golod initial ideal does not imply that the original ideal is Golod. 
\end{example}

\section{Golodness of Rainbow Monomial Ideals with Linear Resolution}\label{sec:golodRainbows}

In this section, we prove that so-called \emph{rainbow} monomial ideals with linear resolution define Golod rings. This result generalizes and extends many pre-existing Golodness results on monomial ideals in the literature (see Corollary \ref{cor:golodMonomialIdeals}). In order to prove this result, we will need to recall a fair amount of background on the structure of the linear strands of rainbow monomial ideals along with higher Massey products on Koszul homology. 

\begin{definition}
A \emph{rainbow monomial ideal} on $n$ colors is any monomial ideal generated by monomials of the form
$$x_{1 a_1} x_{2 a_2} \cdots x_{na_n}$$
for some choices of $a_1 , \dots , a_n$. Variables of the form $x_{i a_i}$ are said to come from the $i$th \emph{color class}. If there are $d_i$ possible values for $a_i$ in the $i$th color class, then the $i$th color class will be said to have $d_i$ variables. 
\end{definition}

\begin{remark}
A rainbow monomial ideal on $n$ colors can be equivalently formulated as a facet ideal of an $n$-partite $n$-uniform clutter. This is the perspective taken in \cite{nematbakhsh2021linear}. 
\end{remark}

Our main interest in this section will be rainbow monomial ideals having linear resolution. We first adopt the following setup:

\begin{setup}\label{setup:rainSetup}
Let $I$ be a rainbow monomial ideal on $n$ colors with $d_i$ variables in each color class. Given such an ideal and any subset $L \subset [n]$, let $p_L$ denote the ranked projection onto the colors contained in $L$; that is, the ideal obtained by setting all variables of colors classes in $[n] \backslash L$ equal to $1$.

If $A = \{ \aa^1 , \dots , \aa^n \}$, where $\aa^i = \{ a_1^i < \cdots < a_{j_i}^i \}$, then the following notation will be used:
$$x_{[n] , A} := x_{1 \aa^1} \cdot x_{2 \aa^2} \cdots x_{n \aa^n}.$$
Similarly, for basis elements of the Koszul complex the following notation will be used:
$$e_{[n],A} := e_{1 \aa^1} \w e_{2 \aa^2} \w \cdots \w e_{n \aa^n}.$$
A basis element in the Koszul complex will be said to have \emph{valid multidegree} if for all choices of $b_i \in \aa^i$, one has that $x_{1 b_1} x_{2 b_2} \cdots x_{n b_n} \in G(I)$. Given a basis element $e_{[n] , A}$ with valid multidegree, define
$$\eta_{n-i} (e_{[n],A}) := e_{1 \aa^1} \w e_{2 \aa^2} \w \cdots \w d(e_{n-i+1, \aa^{n-i+1}}) \w \cdots \w  d(e_{n \aa^n}),$$
where $d$ denotes the differential of the associated Koszul complex. Notice that $\eta_{n-i} (e_{[n],A} )$ is a cycle in $R/I \otimes_R K_\bullet$ by definition of being a valid multidegree.
\end{setup}

\begin{theorem}[{\cite[Corollary 5.3]{nematbakhsh2021linear}}]\label{thm:aminLinear}
Let $I$ be a rainbow monomial ideal with linear resolution. Then any ranked projection of $I$ also has linear resolution. Likewise, $I$ has linear resolution if and only if its complementary ideal $I^c$ has linear resolution. 
\end{theorem}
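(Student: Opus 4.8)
Every minimal generator of a rainbow monomial ideal is squarefree and of the fixed ``rainbow'' degree (namely $n$, or $|L|$ after a ranked projection), so the plan is to route everything through Alexander duality. Write $I$ for a rainbow monomial ideal on the vertex set $V=\coprod_{i=1}^{n}V_i$ with $|V_i|=d_i$, and let $\mathcal{C}$ be the associated $n$-partite $n$-uniform clutter. First I would record the standard identification of the Alexander dual $I^\vee$ with the monomial ideal generated by the minimal vertex covers of $\mathcal{C}$, and then invoke the Eagon--Reiner theorem: since $I$ is equigenerated in the rainbow degree, $I$ has a linear resolution if and only if $R/I^\vee$ is Cohen--Macaulay. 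Both halves of the theorem are thereby reduced to statements about Cohen--Macaulayness of explicit squarefree quotients.

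\textbf{Ranked projections.}
The first step is the combinatorial identity $p_L(I)^\vee = I^\vee\cap k[V_L]$, where $V_L=\coprod_{i\in L}V_i$: a subset of $V_L$ is a minimal vertex cover of the projected clutter $\mathcal{C}_L$ precisely when it is a minimal vertex cover of $\mathcal{C}$ lying inside $V_L$, because a set confined to $V_L$ covers $\mathcal{C}$ if and only if it covers $\mathcal{C}_L$. Through Eagon--Reiner this turns the claim into: passing to the induced subcomplex on $V_L$ (i.e. deleting all color classes outside $L$) takes the Cohen--Macaulay Stanley--Reisner complex of $I^\vee$ to a Cohen--Macaulay complex. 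I expect this to be the main obstacle, since restrictions of Cohen--Macaulay complexes need not be Cohen--Macaulay in general; what one has to exploit is the rigid shape of that complex, which is the union of the ``complemented facets'' $V\setminus\operatorname{supp}(x_{[n],A})$, each of which omits exactly one vertex from every color class. The cleanest way I can see to force the conclusion through is actually to bypass duality and use instead the explicit description of the linear strand of a rainbow monomial ideal recalled in Setup \ref{setup:rainSetup} (the cycles $\eta_{n-i}$ and their higher analogues): the linear part of the resolution of $p_L(I)$ should be obtained from that of $I$ simply by discarding the exterior factors indexed by colors outside $L$, and for an ideal with linear resolution the linear part is the entire resolution, so exactness --- hence linearity --- descends.

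\textbf{The complementary ideal.}
Here I would argue via linkage. Because $G(I)$ and $G(I^c)$ together exhaust the rainbow monomials, $I+I^c=J:=\prod_{i=1}^{n}(x_{i1},\dots,x_{id_i})$, and the Alexander dual of $J$ is the monomial complete intersection $J^\vee=(x_{i1}x_{i2}\cdots x_{id_i}\mid i\in[n])$, so $R/J^\vee$ is Gorenstein. Since $J$ contains both $I$ and $I^c$, Alexander duality gives $J^\vee\subseteq I^\vee\cap(I^c)^\vee$, and all three ideals are unmixed of height $n$. The key lemma to establish is the colon identity $(I^c)^\vee = J^\vee : I^\vee$, that is, $I^\vee$ and $(I^c)^\vee$ are algebraically linked through the Gorenstein ideal $J^\vee$; this is a finite, readily verified computation with minimal vertex covers. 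Granting it, the Peskine--Szpiro theorem that linkage preserves the Cohen--Macaulay property gives that $R/I^\vee$ is Cohen--Macaulay if and only if $R/(I^c)^\vee$ is, which by Eagon--Reiner is exactly the assertion that $I$ has a linear resolution if and only if $I^c$ does.
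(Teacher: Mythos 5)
First, a point of comparison: the paper does not prove this statement at all --- it is imported verbatim from \cite[Corollary 5.3]{nematbakhsh2021linear} --- so there is no internal proof to measure you against, only the question of whether your argument stands on its own. Half of it does. The equivalence between $I$ and $I^c$ via linkage is essentially correct and is a genuinely clean argument: since $I+I^c=J=I_1\cdots I_n$, Alexander duality gives the exact equality $J^\vee=I^\vee\cap(I^c)^\vee$, where $J^\vee=(x_{11}\cdots x_{1d_1},\dots,x_{n1}\cdots x_{nd_n})$ is a height-$n$ complete intersection; writing $I^\vee=\bigcap_{e\in\mathcal{C}}P_e$ and $(I^c)^\vee=\bigcap_{e\notin\mathcal{C}}P_e$ as intersections of pairwise incomparable height-$n$ monomial primes, the colon identities $J^\vee:I^\vee=(I^c)^\vee$ and $J^\vee:(I^c)^\vee=I^\vee$ follow directly from these primary decompositions, and Peskine--Szpiro together with Eagon--Reiner closes the loop. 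You should dispose of the degenerate cases $I=J$ and $I^c=0$, but these are harmless.

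The ranked-projection half, however, has a genuine gap. You correctly establish $p_L(I)^\vee=I^\vee\cap k[V_L]$ and translate the claim into: the induced subcomplex of $\Delta^\vee$ on $V_L$ is Cohen--Macaulay whenever $\Delta^\vee$ is. As you yourself concede, induced subcomplexes (equivalently, vertex deletions) of Cohen--Macaulay complexes need not be Cohen--Macaulay; note that $p_L(I)R$ is the colon $I:u$ with $u$ the product of all variables in the discarded color classes, and colons by variables correspond exactly to such deletions on the Alexander-dual side, so no formal statement about colons or restrictions will rescue the argument. Your fallback --- that the linear part of the resolution of $p_L(I)$ is obtained from that of $I$ ``simply by discarding the exterior factors indexed by colors outside $L$,'' and that ``exactness descends'' --- is precisely the content of the theorem and is asserted rather than proved: discarding exterior factors from the cycles $\eta_{n-1}(e_{[n],A})$ of Setup \ref{setup:rainSetup} produces candidate cycles for $p_L(I)$, but you give no reason why the resulting linear complex is acyclic, nor why it resolves $p_L(I)$ itself rather than mapping onto only part of its resolution. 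Closing this requires the actual combinatorial characterization of linear resolution for $n$-partite $n$-uniform clutters developed in \cite{nematbakhsh2021linear}, or an independent proof that these particular unions of complements of rainbow edges have Cohen--Macaulay restrictions to unions of color classes; neither is supplied.
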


\begin{remark}
The notation $I^c$ for the complementary ideal denotes the ideal with $G(I^c) = G(I_1 \cdots I_n) \backslash G(I)$, where $I_i = (x_{i 1 } , \dots , x_{i d_i} )$. 
\end{remark}

In order to study Golodness of rainbow monomial ideals with linear resolution, we will first need an explicit basis for the associated Koszul homology. 

\begin{obs}\label{obs:rainbowKoszul}
Let $I$ be a rainbow monomial ideal with linear resolution. Then a basis for the Koszul homology $H_\bullet (R/I \otimes_R K^R)$ is represented by all 
$$\eta_{n-1} (e_{[n], A}), \quad \textrm{where} \ e_{[n],A} \ \textrm{has valid multidegree.}$$ 
\end{obs}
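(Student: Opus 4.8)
The plan is to translate the statement into linear algebra over the multigraded module $\operatorname{Tor}^R_\bullet(R/I,k)\cong H_\bullet(R/I\otimes_R K^R)$: I will show that this module is at most one--dimensional in each internal degree, is nonzero exactly in the degrees $x_{[n],A}$ attached to valid boxes $A$, and that the cycle $\eta_{n-1}(e_{[n],A})$ of Setup~\ref{setup:rainSetup} (i.e.\ $e_{1\aa^1}\w d(e_{2\aa^2})\w\cdots\w d(e_{n\aa^n})$, with $d$ applied to all but one factor) represents the unique nonzero class in its degree.

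For the dimension count: every generator $x_{1a_1}\cdots x_{na_n}$ of $I$ is squarefree, so $R/I$ is a squarefree module and $\operatorname{Tor}^R_p(R/I,k)$ is supported in squarefree internal degrees; write such a degree as the squarefree monomial on a set $T=\bigsqcup_i T_i$, with $T_i$ the color--$i$ part. The Koszul/Hochster computation --- here packaged as the short exact sequence of internal--degree--$T$ strands $0\to D_\bullet\to (K^R)_T\to (R/I\otimes_R K^R)_T\to 0$, in which $(K^R)_T$ is acyclic and $D_\bullet$ is a degree shift of the augmented simplicial chain complex of $\Delta_T:=\{S\subseteq T:\prod_{v\in T\setminus S}v\in I\}$ --- gives $\operatorname{Tor}^R_p(R/I,k)_T\cong\widetilde H_{p-2}(\Delta_T)$. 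When $T=\operatorname{supp}(x_{[n],A})$ for a valid box $A$, validity forces the minimal nonfaces of $\Delta_T$ to be exactly $T_1,\dots,T_n$, so $\Delta_T=\partial\Delta^{T_1}*\cdots*\partial\Delta^{T_n}$ is a join of simplex boundaries, hence a triangulated sphere of dimension $(\sum_i|\aa^i|)-n-1$; thus $\operatorname{Tor}^R_p(R/I,k)_T$ is one--dimensional for $p=(\sum_i|\aa^i|)-n+1$ and zero otherwise. For every other squarefree degree I would invoke the structure of the linear strand of a rainbow monomial ideal with linear resolution (Theorem~\ref{thm:aminLinear} and the results of \cite{nematbakhsh2021linear}, cf.\ \cite{vandebogert2021linear}) to see that $\operatorname{Tor}$ vanishes there. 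Summing over multidegrees, $\dim_k H_{\geq 1}(R/I\otimes_R K^R)=\#\{\text{valid boxes }A\}$ (the class $1\in H_0$ being the only extra basis element).

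It then remains to locate the $\eta_{n-1}(e_{[n],A})$. By Setup~\ref{setup:rainSetup} each is a cycle; since applying $d$ to a factor only trades a basis vector $e_{ib}$ for the variable $x_{ib}$, the class $[\eta_{n-1}(e_{[n],A})]$ is homogeneous of internal degree $x_{[n],A}$ and homological degree $(\sum_i|\aa^i|)-n+1$ --- the unique slot found above. As a valid box is recovered from the monomial $x_{[n],A}$, distinct boxes give classes in distinct multidegrees, so these classes are automatically linearly independent once each is shown to be nonzero; by the count they then form a basis (together with $1\in H_0$). The remaining --- and main --- point is this nonvanishing, which I would obtain by chasing $\eta_{n-1}(e_{[n],A})$ through the connecting isomorphism $H_p\big((R/I\otimes_R K^R)_T\big)\xrightarrow{\,\sim\,}\widetilde H_{p-2}(\Delta_T)$: lift it to $(K^R)_T$ and apply $d$; by $d^2=0$ and the Leibniz rule this yields $\pm\,d(e_{1\aa^1})\w d(e_{2\aa^2})\w\cdots\w d(e_{n\aa^n})=\sum_{b_1\in\aa^1,\dots,b_n\in\aa^n}\pm(x_{1b_1}\cdots x_{nb_n})\,e_{T\setminus\{(1,b_1),\dots,(n,b_n)\}}$, whose image in $D_\bullet$ is the signed sum of all facets of the sphere $\Delta_T$, i.e.\ its fundamental cycle, which is nonzero in $\widetilde H_{(\sum_i|\aa^i|)-n-1}(\Delta_T)$; the sign bookkeeping against simplicial orientations is the only routine verification. (A shorter, less self--contained route: quote an explicit minimal linear strand of $R/I$ together with its comparison map to $K^R$ from \cite{nematbakhsh2021linear} or \cite{vandebogert2021linear}, and observe that this map carries a basis of $\operatorname{Tor}$ onto the $\eta_{n-1}(e_{[n],A})$ up to units.)
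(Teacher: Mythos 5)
Your argument is correct, but it takes a genuinely different route from the paper's. The paper's proof is a two-line reduction: it realizes the minimal free resolution of $I$ as the subcomplex of the known multigraded resolution of the product $I_1\cdots I_n$ supported on valid multidegrees, and then imports the explicit description of the Koszul homology classes from \cite{vandebogert2020vanishing}. You instead compute $\tor^R_\bullet(R/I,k)$ one squarefree multidegree at a time via the upper Koszul simplicial complex: in a valid degree $T=\bigsqcup_i T_i$ validity forces the minimal nonfaces of $\Delta_T$ to be exactly the color classes $T_i$, so $\Delta_T$ is a join of simplex boundaries, i.e.\ a sphere, and the Tor space is one-dimensional in the expected homological degree; you then chase $\eta_{n-1}(e_{[n],A})$ through the connecting map of $0\to (I\otimes_R K^R)_T\to (K^R)_T\to (R/I\otimes_R K^R)_T\to 0$ and land on a signed sum of all facets, a nonzero top-dimensional cycle of the sphere (note the sign bookkeeping you worry about is unnecessary: $\Delta_T$ is pure of top dimension $|T|-n-1$, there are no boundaries in that degree, and the element is visibly nonzero, so it automatically represents a nonzero class). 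What your approach buys is a self-contained and transparent verification that the specific cycles $\eta_{n-1}(e_{[n],A})$ are nonzero and independent; what it does not remove is the one genuinely external input, namely that $\tor$ vanishes in all non-valid squarefree multidegrees, which you defer to the structure theory of linear strands of rainbow monomial ideals --- but this is precisely the same reliance on \cite{nematbakhsh2021linear}/\cite{vandebogert2021linear}/\cite{vandebogert2020vanishing} that the paper's own proof makes, so your proposal is at the same level of rigor while being more explicit where it matters.
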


\begin{proof}
Suppose that $I$ has $n$ colors and $d_i$ variables in each color class. Let $I_i = (x_{i 1 } , \dots , x_{i d_i} )$ for each $1 \leq i \leq n$. Then the minimal free resolution of $I$ is a the subcomplex of the multigraded minimal free resolution of $I_{1} I_2 \cdots I_n$ obtained by restricting to all valid multidegrees. Using the explicit form for the Koszul homology given in, for instance, \cite{vandebogert2020vanishing} the result follows.
\end{proof}

\begin{notation}
Adopt notation and hypotheses as in Setup \ref{setup:rainSetup}. The notation $h_{[n],A}$ will be used to denote the homology class of the element $\eta_{n-1} (e_{[n],A})$. Use the notation $\cat{B}^{R/I}$ to denote the set of all such basis elements.
\end{notation}

The following lemma shows that there exists a partially defined trivial Massey operation with respect to certain tuples of basis elements as in Observation \ref{obs:rainbowKoszul}. We will see that higher Massey products of all other tuples must vanish for multidegree reasons, which will yield the desired Golodness statement.

\begin{lemma}\label{lem:trivialMasseyonValid}
Adopt notation and hypotheses as in Setup \ref{setup:rainSetup}. Let $\cat{V} \subset \coprod_{i=1}^\infty \Big(\cat{B}^{R/I} \Big)^i$ be the set of all tuples $(h_{[n],A_1} , \dots , h_{[n],A_p})$ such that $x_{[n],A_1} \cdots x_{[n],A_p}$ has valid multidegree. Then there exists a function 
$$\mu : \cat{V} \to R/I \otimes_R K^R$$
such that $\mu (h_{[n],A}) = \eta_{n-1} (e_{[n],A})$ and
$$d \mu (h_{[n],A_1} , \dots , h_{[n], A_p} ) = \sum_{j=1}^{p-1} \overline{\mu (h_{[n],A_1} , \dots , h_{[n],A_i}} ) \mu (h_{[n],A_{i+1}} , \dots , h_{[n],A_p}).$$
\end{lemma}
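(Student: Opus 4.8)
The plan is to build $\mu$ by induction on the length $p$ of the input tuple, following the recursive scheme of the proof of Theorem~\ref{thm:liftingMasseyToDeform} while carrying along the fine monomial grading that $R/I\otimes_R K^R$ inherits from $R$. For $p=1$ there is nothing to do: put $\mu(h_{[n],A}):=\eta_{n-1}(e_{[n],A})$, which is a cycle representing $h_{[n],A}$ by Setup~\ref{setup:rainSetup} and Observation~\ref{obs:rainbowKoszul}.

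For the inductive step, fix $(h_{[n],A_1},\dots,h_{[n],A_p})\in\cat{V}$. I would first record that every sub-tuple of consecutive entries $(h_{[n],A_i},\dots,h_{[n],A_j})$ again lies in $\cat{V}$: in each color class the support of $x_{[n],A_i}\cdots x_{[n],A_j}$ is contained in the support of $x_{[n],A_1}\cdots x_{[n],A_p}$, and any sub-box of a valid box is valid. Hence $\mu$ is already defined on all the proper sub-tuples of consecutive entries, and we may form
$$m:=\sum_{j=1}^{p-1}\overline{\mu(h_{[n],A_1},\dots,h_{[n],A_j})}\,\mu(h_{[n],A_{j+1}},\dots,h_{[n],A_p})\in R/I\otimes_R K^R.$$
The element $m$ is multihomogeneous of multidegree $\mdeg(x_{[n],A_1}\cdots x_{[n],A_p})$, and the first (routine) point is that $m$ is a cycle: expanding $dm$ via the Leibniz rule and substituting the Massey relations for $d\mu$ on the shorter tuples furnished by the induction, all terms cancel in pairs exactly as in the classical verification for Massey products.

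The crux is to show $[m]=0$ in $H_\bullet(R/I\otimes_R K^R)$; granting this, $m=d(a)$ for some $a$ that may be chosen multihomogeneous of multidegree $\mdeg(x_{[n],A_1}\cdots x_{[n],A_p})$, and setting $\mu(h_{[n],A_1},\dots,h_{[n],A_p}):=a$ finishes the induction. For the vanishing I would argue by degree. Since the minimal generators of $I$ have degree $n$ and the resolution is linear, $\tor^R_q(R/I,k)$ is concentrated in internal degree $q+n-1$ for $q\ge1$; combined with Observation~\ref{obs:rainbowKoszul} this means the Koszul homology is at most one-dimensional in each multidegree, lives only in squarefree multidegrees, and each $h_{[n],A}$ sits in homological degree $\deg(x_{[n],A})-n+1$. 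Tracking degrees through the recursion, $\mu$ of a length-$\ell$ tuple lies in homological degree $\big(\sum\deg x_{[n],A_k}\big)-\ell(n-1)+(\ell-1)$; hence $m$ is multihomogeneous of internal degree $\delta:=\sum_k\deg x_{[n],A_k}$ and of homological degree $q=\delta-p(n-1)+(p-2)$, which one checks is positive. If $\mdeg(x_{[n],A_1}\cdots x_{[n],A_p})$ is not squarefree there is no homology in that multidegree; if it is squarefree, the homology there sits in homological degree $\delta-n+1$, so $[m]\ne0$ would force $q=\delta-n+1$, i.e.\ $(p-1)(n-2)=-1$, which is impossible for $n\ge2$ and $p\ge2$. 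Either way $[m]=0$.

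I expect the degree bookkeeping in the last step to be the genuine obstacle: one must pin down both the homological and the internal degree of $m$ as functions of $p$ and $n$ through the recursion and then check that the resulting bidegree can never be one where $\tor^R_\bullet(R/I,k)$ is nonzero. By contrast, the cancellation showing $dm=0$ is formal, and the lifting step is immediate once $[m]=0$ is established.
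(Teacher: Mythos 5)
Your argument is correct, but it takes a genuinely different route from the paper. The paper's proof is constructive and very short: it simply writes down the closed-form answer
$\mu(h_{[n],A_1},\dots,h_{[n],A_p}):=\eta_{n-2}(e_{[n],A_1})\wedge\eta_{n-1}(e_{[n],A_2})\wedge\cdots\wedge\eta_{n-1}(e_{[n],A_p})$
and defers the verification of the Golod identity to the computation in \cite[Lemma 4.1]{vandebogert2021products}; the validity hypothesis on the product multidegree is precisely what makes this explicit wedge formula close up under the differential, and it is the reason the lemma is stated only for tuples in $\cat{V}$. You instead run the generic Golod-style induction --- form the obstruction cycle $m$, kill its class, choose a bihomogeneous lift --- and the only non-formal input is the bidegree count showing $[m]=0$. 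That count checks out: with each value of $\mu$ chosen bihomogeneous, a length-$\ell$ value has homological degree $\Sigma-\ell(n-1)+(\ell-1)$, so $m$ satisfies $\delta-q=p(n-2)+2$ with $q\geq 2p-2\geq 2$, whereas linearity of the resolution forces any nonzero class in $H_q$ with $q\geq 1$ to satisfy $\delta-q=n-1$, and $(p-1)(n-2)=-1$ is indeed impossible; the sub-tuple closure of $\cat{V}$ that you note makes the induction legitimate. What the two approaches buy is different: the paper's formula gives explicit cycle representatives for the Massey operation (useful elsewhere in the paper), while your argument is non-constructive but strictly more general --- it uses only that $I$ is generated in the single degree $n$ with linear resolution and never invokes the rainbow structure or validity at all, so it in fact constructs the trivial Massey operation on \emph{all} tuples and proves Theorem \ref{thm:rainbowIsGolod} directly, rendering the Case 1/Case 2 split there unnecessary. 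Two small quibbles that do not affect correctness: the parenthetical claim that the Koszul homology is at most one-dimensional in each multidegree is not true in general (Hochster's formula can give higher-dimensional reduced homology) but is never used, and the squarefree/non-squarefree case split is redundant since your total-degree computation already covers both cases.
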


\begin{proof}
Define $\mu$ as follows:
$$ \mu (h_{[n],A_1} , \dots , h_{[n], A_p} ) := \eta_{n-2} (e_{[n],A_1}) \w \eta_{n-1} (e_{[n],A_2}) \w \cdots \w \eta_{n-1} (e_{[n],A_p}).$$
Then a computation identical to that given in \cite[Lemma 4.1]{vandebogert2021products} yields the result.
\end{proof}

Since we will use terminology related to Massey products for the remainder of this section, we give the definition here for convenience.

\begin{definition}\label{def:MasseyProduct}
Let $A$ be a DG-algebra and let $a_1 , \dots , a_n \in H_\bullet (A)$. 

\begin{enumerate}
    \item The \emph{$n$-ary Massey product} $\mu_n (a_1 , \dots , a_n) \subset H_\bullet (A)$ is a partially defined set-valued function. It is defined if there exist $a_{ij} \in A$ for $1 \leq i \leq j \leq n$ such that
    \begin{enumerate}
        \item the homology class of $a_{ii}$ is equal to $a_i$, and
        \item there is an equality 
        $$d a_{ij} = \sum_{k=i}^j \overline{a_{ik}} a_{kj}.$$
    \end{enumerate}
    Then the Massey product is defined to be the set of all homology classes of the sums $\sum_{k=1}^n \overline{a_{1k}} a_{kn}$. 
    \item The DG-algebra $A$ satisfies $(B_r)$ if for all $k \leq r$ and elements $a_1 , \dots , a_k \in H_\bullet (A)$, one has:
\begin{center}
    the Massey product $\mu_k (a_1 , \dots , a_k)$ exists and only contains $0$.
\end{center}
\end{enumerate}
\end{definition}

\begin{remark}
Observe that if $A$ has an internal grading, then any Massey product may be chosen to respect the internal grading; in particular, any Massey product on the Koszul homology of a multigraded algebra may be chosen to respect this multigrading. 
\end{remark}

\begin{remark}\label{rk:GolodAndMassey}
Notice that if a DG-algebra $A$ satisfies $B_r$ for all $r \geq 2$, then by definition for all elements $a_1 , \dots , a_r \in H_\bullet (A)$ there exist $a_{ij} \in A$ such that
$$d a_{1r} = \sum_{i=1}^r \overline{a_{1i}} a_{ir}.$$
Defining $\mu (a_1 , \dots , a_i) := a_{1i}$ for $i \leq r$ and $\mu (a_{i+1} , \dots , a_r) := a_{ir}$ for $i < r$ recovers the definition of a trivial Massey operation, whence Golodness of a ring $R/I$ is equivalent to the algebra $R/I \otimes_R K^R$ satisfying $(B_r)$ for all $r \geq 2$. 
\end{remark}

\begin{lemma}[{\cite[Proposition 2.3]{may1969matric}} or {\cite[Lemma 20]{kraines1966massey}}]\label{lem:inductiveStepForMassey}
Let $A$ be a DG-algebra satisfying $(B_{r-1})$ for some $r \geq 3$. Then all $r$-ary Massey products exist and contain only a single element.
\end{lemma}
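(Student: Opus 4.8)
The plan is to prove the two assertions separately: (i) a defining system for $\mu_r(a_1,\dots,a_r)$ exists, so the product is defined; and (ii) the homology class it produces is independent of all choices, so the product is a single element. Both parts follow the classical arguments of May \cite{may1969matric} and Kraines \cite{kraines1966massey}. The engine common to both is the same observation: every proper sub-tuple of $(a_1,\dots,a_r)$ has length at most $r-1$, so by hypothesis $(B_{r-1})$ the Massey product of that sub-tuple is defined and equal to $\{0\}$, and this is exactly what makes each inductive construction below go through.

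For existence, I would build the entries $a_{ij}$ of a defining system (with $1\le i\le j\le r$ and $(i,j)\ne(1,r)$) by induction on $\ell:=j-i$. For $\ell=0$ one picks any cycle $a_{ii}$ representing $a_i$. Given all entries with index difference $<\ell$, and a pair $(i,j)$ with $j-i=\ell\le r-2$, set $u_{ij}:=\sum_{k=i}^{j-1}\overline{a_{ik}}\,a_{k+1,j}$. A direct (sign-heavy but routine) computation using only the relations of Definition \ref{def:MasseyProduct} already imposed on the smaller-index entries shows $du_{ij}=0$; moreover the family $\{a_{pq}: i\le p\le q\le j,\ (p,q)\ne(i,j)\}$ is, by construction, a defining system for $\mu_{\ell+1}(a_i,\dots,a_j)$ whose value is $[u_{ij}]$. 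Since $\ell+1\le r-1$, hypothesis $(B_{r-1})$ forces $[u_{ij}]=0$, so $u_{ij}$ is a boundary and I choose $a_{ij}$ with $da_{ij}=u_{ij}$. Running this up through $\ell=r-2$ completes a defining system, so $\mu_r(a_1,\dots,a_r)$ is defined.

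For single-valuedness, I would compare two defining systems $(a_{ij})$ and $(a'_{ij})$ by constructing, again by induction on $\ell=j-i$ (now for $0\le\ell\le r-2$), ``homotopy'' elements $s_{ij}$ satisfying a relation of the form $a'_{ij}-a_{ij}=ds_{ij}+\sum_{k=i}^{j-1}\big(\overline{s_{ik}}\,a_{k+1,j}+\overline{a'_{ik}}\,s_{k+1,j}\big)$ (after pinning down signs); the base case $\ell=0$ is possible because $a_{ii}$ and $a'_{ii}$ are homologous. At stage $\ell$, solving for $s_{ij}$ reduces to showing that a specific cycle built from the smaller-index $s$'s and $a$'s is a boundary, and that cycle represents a difference of two values of $\mu_{\ell+1}(a_i,\dots,a_j)$, hence lies in $\{0\}$ by $(B_{r-1})$. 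Performing the same manipulation at the top level --- comparing the two competing values $\sum_k\overline{a_{1k}}\,a_{k+1,r}$ and $\sum_k\overline{a'_{1k}}\,a'_{k+1,r}$ using the $s_{ij}$ just built --- exhibits their difference as a boundary, so the two values coincide in $H_\bullet(A)$. (Alternatively, one may cite the standard fact that the indeterminacy of an $r$-fold Massey product is contained in a sum of products of values of Massey products of proper sub-tuples, each of which is $0$ under $(B_{r-1})$.)

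The one genuinely delicate point is the sign bookkeeping in the two ``direct computation'' steps --- checking $du_{ij}=0$ in the first stage, and checking the consistency and solvability of the relation defining the $s_{ij}$ in the second. These are precisely \cite[Proposition 2.3]{may1969matric} and \cite[Lemma 20]{kraines1966massey}; both can be streamlined by recording a defining system as a strictly upper-triangular matrix $M$ over $A$ with $dM=M^2$ off the top-right corner, so that both computations become one Maurer--Cartan-type manipulation.
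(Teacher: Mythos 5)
Your proposal is correct, but note that the paper itself supplies no proof of this lemma---it is quoted directly from \cite[Proposition 2.3]{may1969matric} and \cite[Lemma 20]{kraines1966massey}---so the only meaningful comparison is against those sources, and your reconstruction is precisely their classical two-step argument: the obstruction to extending a partial defining system at stage $j-i=\ell\le r-2$ is a value of $\mu_{\ell+1}(a_i,\dots,a_j)$, killed by $(B_{r-1})$, and single-valuedness follows from the inductive homotopy between defining systems, whose stage-by-stage obstructions are again differences of values of shorter products. Two small cautions: your indexing $\sum_{k=i}^{j-1}\overline{a_{ik}}\,a_{k+1,j}$ is the Kraines--May convention rather than the one literally printed in Definition \ref{def:MasseyProduct} (whose sum $\sum_{k=i}^{j}\overline{a_{ik}}a_{kj}$ appears to contain degenerate endpoint terms and should be read as yours), and the parenthetical ``standard fact'' about indeterminacy being a sum of products of sub-tuple Massey products is \emph{not} true for general higher Massey products (their value sets need not even be cosets), so it should not be cited as an independent shortcut---under $(B_{r-1})$ it is exactly the content of the lemma being proved, and your explicit homotopy argument is the correct route.
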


The following theorem is the main result of this section.

\begin{theorem}\label{thm:rainbowIsGolod}
Let $I$ be a rainbow monomial ideal with linear resolution. Then $I$ is Golod.
\end{theorem}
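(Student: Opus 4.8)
The plan is to show that the DG-algebra $R/I \otimes_R K^R$ satisfies $(B_r)$ for all $r \geq 2$; by Remark \ref{rk:GolodAndMassey} this is equivalent to Golodness of $R/I$. I would proceed by induction on $r$, with the base cases $r = 2, 3$ and the inductive step handled uniformly. The crucial structural input is that every Koszul homology class lives in a single multidegree (by Observation \ref{obs:rainbowKoszul}, the basis $\cat{B}^{R/I}$ consists of the classes $h_{[n],A}$, and $h_{[n],A}$ is supported in the multidegree of $x_{[n],A}$), and that by the Remark following Definition \ref{def:MasseyProduct} every Massey product on the Koszul homology may be chosen to respect this multigrading.

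First I would set up the multidegree bookkeeping. For a tuple $(h_{[n],A_1}, \dots, h_{[n],A_p})$, any element $a_{ij}$ appearing in a defining system for the Massey product must be supported in the multidegree of the product $x_{[n],A_i} \cdots x_{[n],A_j}$. The dichotomy is: either the product $x_{[n],A_1} \cdots x_{[n],A_p}$ has \emph{valid multidegree} (in the sense of Setup \ref{setup:rainSetup}), in which case the tuple lies in the set $\cat{V}$ of Lemma \ref{lem:trivialMasseyonValid}, or it does not. In the latter case I want to argue that the relevant graded strand of $R/I \otimes_R K^R$ in the target multidegree is zero (or at least that no cycle of the required homological degree survives), so the Massey product is forced to be $\{0\}$ vacuously — more precisely, for multidegree reasons there is nothing for $\sum_k \overline{a_{1k}} a_{kn}$ to be except $0$. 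This is where I'd need to be careful: I'd use the description of the minimal free resolution of $I$ as the subcomplex of the resolution of $I_1 \cdots I_n$ supported on valid multidegrees (from the proof of Observation \ref{obs:rainbowKoszul}), which controls exactly which multigraded pieces of the Koszul homology — and, with a little more work, which multigraded pieces of $R/I \otimes_R K^R$ relevant to cycles of the given homological degree — can be nonzero.

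For the tuples that \emph{do} have valid multidegree, Lemma \ref{lem:trivialMasseyonValid} already supplies an explicit function $\mu$ on $\cat{V}$ satisfying the trivial Massey identity; restricting a defining system to such a tuple, the explicit formula $\mu(h_{[n],A_1}, \dots, h_{[n],A_p}) = \eta_{n-2}(e_{[n],A_1}) \w \eta_{n-1}(e_{[n],A_2}) \w \cdots \w \eta_{n-1}(e_{[n],A_p})$ shows the output sum $\sum_k \overline{a_{1k}} a_{kn}$ is a boundary, hence its homology class is $0$. Combined with Lemma \ref{lem:inductiveStepForMassey} (which guarantees, once $(B_{r-1})$ holds, that the $r$-ary products all exist and are singletons), the induction closes: $(B_{r-1})$ plus the multidegree dichotomy plus Lemma \ref{lem:trivialMasseyonValid} gives $(B_r)$. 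The main obstacle I anticipate is the ``invalid multidegree'' case — making rigorous that the ambient complex $R/I \otimes_R K^R$ has no room, in the offending multidegree and homological degree, to produce a nonzero class; this requires pinning down the multigraded structure of the full tensor product (not just its homology) well enough to rule out surviving cycles, and is the step where Theorem \ref{thm:aminLinear} (linearity of ranked projections) and the complementary-ideal symmetry are likely to be invoked. Once that vanishing is established, the rest is the routine induction sketched above.
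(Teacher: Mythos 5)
Your proposal follows essentially the same route as the paper's proof: induction on the arity of the Massey product via the conditions $(B_r)$, using Lemma \ref{lem:inductiveStepForMassey} to guarantee existence and uniqueness at each stage, and then splitting into the valid/invalid multidegree dichotomy, with Lemma \ref{lem:trivialMasseyonValid} handling the valid case and a multidegree-support argument (the homology being concentrated in valid multidegrees per Observation \ref{obs:rainbowKoszul}) forcing vanishing in the invalid case. Your extra caution about rigorously justifying the invalid-multidegree vanishing is reasonable, but the paper disposes of it exactly as you suspect, by noting the homology class in an invalid multidegree must be trivial.
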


\begin{proof}
The result will follow by proving that all higher order Massey products are defined and are trivial; that is, all higher order Massey products contain only the element $0$. To this end, we proceed inductively on the order of the higher Massey product. When $p=2$, the assumption that $I$ has linear resolution implies that the homology algebra $H_\bullet (R/I \otimes_R K^R)$ is trivial.

Assume now that $p >2$. By the inductive hypothesis, the algebra $R/I \otimes_R K^R$ satisfies $(B_{p-1})$, so by Lemma \ref{lem:inductiveStepForMassey} all $p$-ary Massey products are defined and contain only a single element. Let $h_{[n],A_1} , \dots , h_{[n],A_p}$ be a collection of basis elements in $\cat{B}^{R/I}$ with valid multidegree (with respect to the ideal $I$). There are two cases to consider:

\textbf{Case 1:} The element $x_{[n],A_1} \cdots x_{[n],A_p}$ does not have valid multidegree. In this case, since the Massey product $\mu (h_{[n],A_1}) , \dots , h_{[n],A_p} )$ does not have valid multidegree, it represents the trivial homology class.

\textbf{Case 2:} The element $x_{[n],A_1} \cdots x_{[n],A_p}$ does have valid multidegree. In this case, there is a trivial Massey operation on the set of all elements whose product has valid multidegree by Lemma \ref{lem:trivialMasseyonValid}. This implies that the Massey operation must only contain the $0$ element. 
\end{proof}

As previously mentioned, rainbow monomial ideals with linear resolution are ubiquitous in the literature; the following corllary gives a non-exhaustive list of monomial ideals obtained as (specializations of) rainbow monomial ideals with linear resolution (see Section $6$ of \cite{floystad2017letterplace} for more on co-letterplace ideals and their specializations).

\begin{cor}\label{cor:golodMonomialIdeals}
The following classes of monomial ideals are Golod:
\begin{enumerate}
    \item Edge ideals of Ferrers graphs and hypergraphs.
    \item Any restricted power of a strongly stable hypergraph ideal.
    \item Co-letterplace ideals (and by specialization, edge ideals of cointerval $d$-hypergraphs and uniform face ideals).
    \item Clutters associated to arithmetically Cohen-Macaulay sets of points in $(\bbp^1)^{\times d}$.
    \item Alexander duals of polarizations of Artinian monomial ideals.
\end{enumerate}
More generally, any monomial ideal with linear resolution that has a polarization that can be relabelled as a rainbow monomial ideal is Golod. 
\end{cor}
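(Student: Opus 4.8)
The plan is to deduce Corollary \ref{cor:golodMonomialIdeals} entirely from the preceding machinery, reducing every listed class to Theorem \ref{thm:rainbowIsGolod} via Theorem \ref{thm:golodAndDeformation} (or its polarization corollary). The key structural observation is that each of the five classes is, after a suitable polarization and relabeling of variables, a rainbow monomial ideal with linear resolution; since polarization is a sequence of variable-difference complete intersections regular on the quotient, the Corollary to Theorem \ref{thm:golodAndDeformation} says Golodness transfers up and down the polarization. So the entire proof is really the verification of two facts for each class: (a) a polarization, suitably relabeled, is a rainbow monomial ideal, and (b) that rainbow monomial ideal still has a linear resolution.

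First I would record the general principle as the final sentence already states it: if a monomial ideal $I$ has linear resolution and some polarization $\widetilde I$ can be relabeled so that its generators all have the form $x_{1a_1}\cdots x_{na_n}$ with one variable from each of $n$ color classes, then $\widetilde I$ is rainbow; Theorem \ref{thm:aminLinear} together with the fact that polarization preserves graded Betti numbers (hence linearity) gives that $\widetilde I$ has linear resolution; Theorem \ref{thm:rainbowIsGolod} gives that $\widetilde S/\widetilde I$ is Golod; and the Corollary to Theorem \ref{thm:golodAndDeformation} transfers Golodness back to $S/I$. Then I would treat the five items as instances. For (3), co-letterplace ideals are rainbow essentially by construction — the letterplace correspondence writes each generator as a product of variables $x_{i,a_i}$ indexed by position $i$, which is exactly the color-class structure — and these are known to have linear resolution (Fløystad et al., \cite{floystad2017letterplace}); the parenthetical classes (edge ideals of cointerval $d$-hypergraphs, uniform face ideals) are specializations, i.e. obtained by further variable identifications, hence are covered by the same transfer argument in reverse. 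For (2), a restricted power of a strongly stable hypergraph ideal polarizes to a co-letterplace-type ideal with linear resolution (this is the content of \cite{almousa2021polarizations}/\cite{almousa2022polarizations} and \cite{nematbakhsh2021linear}), so it reduces to (3). For (1), edge ideals of Ferrers graphs/hypergraphs with linear resolution are a well-known special case of co-letterplace ideals (Ferrers being the "staircase" shape), so again reduce to (3). For (4), clutters of arithmetically Cohen–Macaulay point sets in $(\bbp^1)^{\times d}$ are rainbow on $d$ colors with two variables per class, and the ACM condition is exactly equivalent to the linear resolution condition by the results cited in Section \ref{sec:golodRainbows} (\cite{nematbakhsh2021linear}, \cite{vandebogert2021linear}). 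For (5), Alexander duals of polarizations of Artinian monomial ideals are handled by combining the polarization results of \cite{almousa2022polarizations}, \cite{almousa2021polarizations} with Theorem \ref{thm:aminLinear}'s statement that $I$ has linear resolution iff its complement $I^c$ does — the Alexander dual of a rainbow ideal is (up to the complementation operation) again rainbow, and linearity is preserved.

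The main obstacle — and the only part requiring genuine care rather than citation — is the bookkeeping in fact (a): checking that the polarization of an ideal from each class can actually be relabeled as a rainbow monomial ideal, i.e. that the polarized variables organize into $n$ color classes with exactly one variable appearing in each generator from each class. For squarefree monomial ideals already in "multipartite" form (co-letterplace, Ferrers, ACM point configurations) this is immediate; the work is in (2) and (5), where one must follow the polarization construction of Almousa–Fløystad–Lohne and verify that the resulting variable set admits the color-class grading and that the generators are genuinely "rainbow" (each generator uses one variable per color) rather than merely squarefree multidegree. I would handle this by citing \cite[Theorem/Construction]{almousa2022polarizations} for the explicit polarization and \cite{nematbakhsh2021linear} for the identification with facet ideals of $n$-partite $n$-uniform clutters, then noting linearity is inherited because polarization preserves the (multi)graded Betti table.

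Thus the proof is: apply Theorem \ref{thm:rainbowIsGolod} to the relabeled polarization in each case, invoking Theorem \ref{thm:aminLinear} and the Betti-number invariance of polarization to preserve linearity, and transfer the conclusion back across the polarization via the Corollary to Theorem \ref{thm:golodAndDeformation}; the final clause of the statement is precisely this argument stated in general.
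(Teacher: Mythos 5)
Your proposal is correct and matches the paper's intended argument: the paper gives no written proof of this corollary, but the surrounding text makes clear that each class is meant to be recognized as a (specialization or relabeled polarization of a) rainbow monomial ideal with linear resolution, so that Theorem \ref{thm:rainbowIsGolod} applies and Golodness is transferred across the variable-difference specializations via Section \ref{sec:CIvarDiff}. Your reconstruction, including the use of Betti-number invariance of polarization to preserve linearity and the two-way transfer along the polarization, is exactly this argument.
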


The question of when a given monomial ideal with linear resolution has a polarization that can be relabelled as a rainbow monomial ideal seems to be subtle; the following example shows that there exist monomial ideals with no such polarization. 

\begin{example}[\cite{reiner2001linear}]
Consider the following ideal:
$$I := (x_3 x_4 x_5 x_6, x_2 x_4 x_5 x_6, x_1 x_4 x_5 x_6,$$
 $$x_3 x_4 x_5 x_7, x_2 x_4 x_5 x_v, x_1 x_3 x_5 x_7, x_1 x_2 x_4 x_7, x_1 x_4 x_6 x_7,$$
$$ x_1 x_5 x_6 x_7, x_3 x_4 x_6 x_7, x_2 x_5 x_6 x_7, x_2 x_3 x_6 x_7 , x_1 x_2 x_3 x_7) \subset k[x_1 , \dots , x_7].$$
Then $I$ has linear resolution, but if $\operatorname{char} k  \neq 2, \ 3$, Reiner and Welker show that there is no choice of basis in the minimal free resolution for which the differentials have coefficients $0$ or $\pm 1$. Thus, there does not exist a polarization of $I$ that can be relabelled as a rainbow monomial ideal since the minimal free resolution of a rainbow monomial ideal has coefficients $0$ or $\pm 1$, independent of characteristic.
\end{example}

\section{Golodness of Maximal Minors of Sparse Matrices}\label{sec:golodnessSparseMatrices}

In this section, we combine many of the results established in this paper thus far to prove that (any power of) maximal minors of (sparse) generic matrices define Golod rings. The proof of this result will use some results already established in the literature which we recall for convenience. Using these results we will deduce that these ideals have fiber invariant Betti numbers and hence Golodness may be transferred along to associated Gr\"obner degeneration. The initial ideals in these cases will be evidently Golod by Theorems \ref{thm:rainbowIsGolod} and \ref{thm:herzogMaleki}, whence the result will follow immediately. 

We begin this section by adopting the following setup:

\begin{setup}\label{setup:sparseDet}
Let $X$ be a generic $n \times m$ matrix ($n \leq m$) with coordinate ring $R = k[X] = k[x_{ij}]_{\substack{1 \leq i \leq n \\ 1 \leq j \leq m}}$. Let $X'$ be any matrix obtained by setting entries of $X$ equal to $0$ in such a way that the nonzero entries form a 2-sided ladder (see, for instance, \cite[Remark 4.11]{celikbas2021rees}). The notation $I_n (X')$ will denote the ideal of $n \times n$ (maximal) minors of $X'$.
\end{setup}

The following theorem combines numerous results from across the literature on initial ideals of ideals of maximal minors; we will outline the proofs and references for these implications in Remark \ref{rk:proofForSparse}. 

\begin{theorem}\label{thm:sparseMinorsResults}
Adopt notation and hypotheses as in Setup \ref{setup:sparseDet} and define $I := I_n (X')$. Then:
\begin{enumerate}
    \item For any term order $<$, the set of maximal minors of $X'$ forms a Gr\"obner basis for $I$ (that is, the set of maximal minors is a universal Gr\"obner basis).
    \item For any diagonal term order $<$ and all $t \geq 1$, there is the equality
    $$\inn_< (I^t) = \inn_< (I)^t.$$
    \item For any term order $<$, the ideal $\inn_< (I)^t$ has linear resolution for all $t$. 
\end{enumerate}
\end{theorem}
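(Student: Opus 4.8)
The plan is to prove the theorem by assembling known results on determinantal ideals and then descending from a full generic matrix to the 2-sided ladder $X'$, a descent which is routine throughout because setting the off-ladder variables to $0$ only deletes generators and syzygies and never manufactures a new coincidence of leading terms. The single structural input I would extract first is the \emph{pruning description} of the resolution: for any term order the maximal minors of $X'$ form a Gr\"obner basis of $I$, and both the minimal free resolution of $I$ and that of $\inn_<(I)$ are obtained from the Eagon--Northcott complex (which resolves $I_n(X)$ linearly) by discarding the basis elements whose multidegree is not supported on the ladder. For a full generic matrix the universal Gr\"obner basis statement is the theorem of Sturmfels and Zelevinsky \cite{sturmfels1993maximal}; the ladder case and the pruning description are Boocher's \cite{boocher2012free}. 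This already gives part $(1)$ and the $t=1$ case of part $(3)$, since the pruned complex is again linear.

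For part $(2)$ I would fix a diagonal term order and pass to the Rees algebra $\mathcal{R}(I) = \bigoplus_{t \geq 0} I^t$. The claim is that in the induced order the products of $t$ maximal minors form a Gr\"obner basis of $I^t$ simultaneously for every $t$; equivalently, a diagonal order induces a SAGBI/Gr\"obner degeneration of $\mathcal{R}(I)$ onto the Rees algebra of $\inn_<(I)$, so that $\inn_<\big(\bigoplus_t I^t\big) = \bigoplus_t \inn_<(I)^t$ and hence $\inn_<(I^t) = \inn_<(I)^t$ in each Rees degree. For a full generic matrix this is the content of the straightening-law / Knuth--Robinson--Schensted analysis of Gr\"obner bases of powers of determinantal ideals (Bruns--Conca), and for the 2-sided ladder one combines the description of the Rees algebra of a ladder determinantal ideal from \cite{celikbas2021rees} with the off-ladder specialization used in part $(1)$.

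For the remaining powers in part $(3)$ I would identify, for a diagonal order, the initial ideal $\inn_<(I)$ explicitly: it is the squarefree monomial ideal generated by the main-diagonal monomials $x_{1 j_1} x_{2 j_2} \cdots x_{n j_n}$ of the nonzero $n\times n$ submatrices of $X'$, and its generating set is indexed by an order ideal in a distributive lattice of monotone column sequences. Such ideals are of co-letterplace / Hibi type (a ladder variant of the ideals discussed in Corollary \ref{cor:golodMonomialIdeals}) and all of their powers have linear quotients, hence linear resolutions, by the sortable-ideals circle of results (cf.\ Herzog--Hibi and Fl{\o}ystad--Greve--Herzog); combined with part $(2)$ this yields that $\inn_<(I^t) = \inn_<(I)^t$ has a linear resolution for a diagonal order. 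For a general term order the minors still have squarefree leading terms, so $\inn_<(I)$ is again of the above combinatorial type, and one appeals to the linear-powers theorem for maximal minors \cite{bruns2015maximal} together with the pruning description.

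The step I expect to be the main obstacle is part $(2)$: making precise that ``taking initial ideals commutes with taking powers'' requires controlling the Rees algebra of a (ladder) determinantal ideal and checking that a single diagonal order produces a Gr\"obner/SAGBI basis valid in every Rees degree at once. The delicate point is combinatorial --- ruling out, for a diagonal order, every leading-term cancellation in a product of $t$ minors --- and it is exactly here that the hypothesis of a \emph{diagonal} order, rather than an arbitrary one, is essential; indeed even the equality $\inn_<(I^t) = \inn_<(I)^t$ can fail for non-diagonal orders. By contrast, the passage from generic to ladder matrices in all three parts is routine for the reason noted at the outset.
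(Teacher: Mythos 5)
Your proposal is correct in substance and, like the paper, is essentially an assembly of literature results; but the routes diverge in the details. For part $(1)$ you and the paper invoke the same circle of results (Sturmfels--Zelevinsky, with the sparse case handled by Bernstein--Sturmfels resp.\ Boocher's pruning --- these are the same underlying argument). For part $(2)$ the paper simply cites Conca's theorem for the fully generic case and is silent on how the ladder case follows, whereas you propose the Rees/SAGBI degeneration combined with \cite{celikbas2021rees}; your account is actually the more complete one here, and you correctly single out part $(2)$ as the delicate step where the diagonal hypothesis is essential. For part $(3)$ the two arguments genuinely differ: the paper deduces linear powers from the fact (proved in \cite{celikbas2021rees}) that the defining ideal of the Rees algebra has a quadratic Gr\"obner basis, hence the Rees algebra is Koszul, and then applies the Herzog--Hibi--Zheng criterion \cite[Theorem 1.1]{herzog2004monomial}; you instead identify $\inn_<(I)$ explicitly as a Hibi/co-letterplace-type monomial ideal and argue that all its powers have linear quotients. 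The paper's route buys uniformity (no combinatorial analysis of the initial monomial ideal is needed), while yours is more explicit but carries one soft spot: for a \emph{non-diagonal} term order the leading terms are no longer the main-diagonal monomials, so the claim that $\inn_<(I)$ is ``again of the above combinatorial type'' with sortable powers needs separate justification. This is not fatal --- the only place the theorem is used for $t>1$ (Theorem \ref{thm:sparseGolod}) fixes a diagonal order --- but as stated your argument for part $(3)$ at arbitrary orders leans on an unproved combinatorial assertion, whereas the Koszul Rees algebra argument sidesteps it.
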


\begin{remark}\label{rk:proofForSparse}
Part $(1)$ of Theorem \ref{thm:sparseMinorsResults} is a well-known result due to Sturmfels and Zelevinsky, see \cite{sturmfels1993maximal} and \cite{bernstein1993combinatorics}. In the case that $X'$ is a fully generic matrix, part $(2)$ was proved originally by Conca in \cite[Theorem 2.1]{conca1997grobner}. Finally, part $(3)$ is immediate from the results of \cite{celikbas2021rees}, since their results show that the defining ideal of the Rees algebra has a quadratic Gr\"obner basis and is hence Koszul, so the linear powers property follows from \cite[Theorem 1.1]{herzog2004monomial}. 
\end{remark}

\begin{cor}\label{cor:fiberInvSparse}
Adopt notation and hypotheses as in Setup \ref{setup:sparseDet}. Then $I_n (X')$ has fiber invariant Betti numbers for any term order $<$.
\end{cor}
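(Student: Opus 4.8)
The plan is to combine parts (1) and (3) of Theorem~\ref{thm:sparseMinorsResults} with Theorem~\ref{thm:fiberInvLinear}. First I would note that part (1) of Theorem~\ref{thm:sparseMinorsResults} says that the maximal minors of $X'$ form a universal Gr\"obner basis; in particular, for any term order $<$, the set of maximal minors is a Gr\"obner basis of $I := I_n(X')$. Thus the initial ideal $\inn_<(I)$ is the ideal generated by the leading terms of the maximal minors. Second, I would invoke part (3) of Theorem~\ref{thm:sparseMinorsResults} with $t=1$: the ideal $\inn_<(I)$ has linear resolution. Third, I would recall that $I_n(X')$, being an ideal of maximal minors, also has linear resolution---this is classical (the Eagon--Northcott complex resolves it, and in the sparse/ladder case this is part of the cited literature), but in any case it follows from Theorem~\ref{thm:fiberInvLinear} itself once we know $\inn_<(I)$ has linear resolution, since case (1) of that theorem requires only that $I$ has linear minimal free resolution together with $\inn_<(I)$ having linear resolution. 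Here is the subtlety: Theorem~\ref{thm:fiberInvLinear} as stated assumes $I$ has linear resolution as a hypothesis, so I do need to supply that fact independently. The cleanest route is to observe that $\inn_<(I)$ having linear resolution forces $\reg(R/\inn_<(I)) = n-1$, and since regularity cannot increase under Gr\"obner degeneration, $\reg(R/I) \le n-1$; as $I$ is generated in degree $n$, this gives that $I$ has linear resolution.

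Having established that both $I$ and $\inn_<(I)$ have linear resolution, I would apply Theorem~\ref{thm:fiberInvLinear}(1) directly to conclude that $I = I_n(X')$ has fiber invariant Betti numbers with respect to $<$. Since $<$ was an arbitrary term order, this proves the corollary.

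The main obstacle---really the only point requiring care---is justifying that $I_n(X')$ has linear resolution so that the hypothesis of Theorem~\ref{thm:fiberInvLinear} is met. As sketched above, the regularity argument handles this: $\reg$ does not increase passing from $I$ to $\inn_<(I)$ (by the standard semicontinuity of Betti numbers encoded in Corollary~\ref{cor:flatResolution} and the inequality $\beta_{ij}(R/I) \le \beta_{ij}(R/\inn_<(I))$), the latter equals $n-1$ by part (3) of Theorem~\ref{thm:sparseMinorsResults}, and $I$ is generated in degree $n$, forcing $\reg(R/I) = n-1$ and hence a linear resolution. Alternatively one may simply cite that ideals of maximal minors of ladder matrices are resolved by (a sparse version of) the Eagon--Northcott complex, which is linear. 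Either way the argument is short, and I would spell out the regularity version since it keeps the proof self-contained within the results already invoked in this section.

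\begin{proof}
By part (1) of Theorem~\ref{thm:sparseMinorsResults}, the set of maximal minors of $X'$ is a Gr\"obner basis of $I := I_n(X')$ with respect to every term order $<$, so $\inn_<(I)$ is well understood and, by part (3) of Theorem~\ref{thm:sparseMinorsResults} applied with $t = 1$, has linear resolution. Consequently $\reg\big(R/\inn_<(I)\big) = n-1$. By Corollary~\ref{cor:flatResolution} and the resulting inequality $\beta_{ij}(R/I) \le \beta_{ij}\big(R/\inn_<(I)\big)$ for all $i,j$, we have $\reg(R/I) \le \reg\big(R/\inn_<(I)\big) = n-1$. Since $I$ is generated in degree $n$, this forces $\reg(R/I) = n-1$, i.e. $I$ has linear minimal free resolution. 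Now Theorem~\ref{thm:fiberInvLinear}(1) applies to $I$: its initial ideal $\inn_<(I)$ has linear resolution, so $I$ has fiber invariant Betti numbers with respect to $<$. As $<$ was an arbitrary term order, the claim follows.
\end{proof}
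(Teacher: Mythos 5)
Your proof is correct, but it takes a genuinely different route from the paper's. The paper argues via squarefreeness: part (1) of Theorem~\ref{thm:sparseMinorsResults} gives that every initial ideal of $I_n(X')$ is squarefree (the leading terms of minors being squarefree monomials), Boocher's pruning technique is cited to establish that $I_n(X')$ itself has linear resolution, and then case (2) of Theorem~\ref{thm:fiberInvLinear} finishes. You instead route everything through linearity of the initial ideal: part (3) of Theorem~\ref{thm:sparseMinorsResults} with $t=1$ gives that $\inn_<(I)$ has linear resolution, the semicontinuity $\beta_{ij}(R/I)\leq\beta_{ij}(R/\inn_<(I))$ from Corollary~\ref{cor:flatResolution} forces $\reg(R/I)\leq n-1$ and hence (since $I$ is generated in degree $n$) a linear resolution for $I$, and then case (1) of Theorem~\ref{thm:fiberInvLinear} applies. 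Your regularity argument for linearity of $I$ is sound and has the merit of staying entirely within results already stated in the paper, avoiding the external citation of Boocher's pruning and the Conca--Varbaro squarefree regularity theorem; the trade-off is that it leans on part (3) of Theorem~\ref{thm:sparseMinorsResults}, which itself rests on a chain of Rees-algebra results, whereas the paper's squarefreeness observation needs only the universal Gr\"obner basis statement of part (1). Both arguments are valid and yield the corollary for every term order.
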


\begin{remark}
In the case that $X'$ is a fully generic matrix, Corollary \ref{cor:fiberInvSparse} was proved by Boocher in {\cite[Theorem 3.1]{boocher2012free}}.
\end{remark}

\begin{proof}
By part $(1)$ of Theorem \ref{thm:sparseMinorsResults}, all initial ideals of $I_n (X')$ are squarefree. Moreover, the pruning technique given by Boocher in \cite{boocher2012free} shows that $I_n (X')$ has linear resolution, so the result follows from Theorem \ref{thm:fiberInvLinear}.
\end{proof}

The following result is the main result of this section and employs the results many of the earlier sections of this paper.

\begin{theorem}\label{thm:sparseGolod}
Adopt notation and hypotheses as in Setup \ref{setup:sparseDet}. Then $I_n (X')^t$ is Golod for all $t \geq 1$. 
\end{theorem}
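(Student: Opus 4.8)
The plan is to reduce the statement to the results already assembled in this section, so that Golodness follows by combining fiber invariance with the known Golodness of the relevant initial ideals. First I would fix a diagonal term order $<$ on $R = k[X']$. By part $(2)$ of Theorem \ref{thm:sparseMinorsResults} we have $\inn_< (I^t) = \inn_< (I)^t$ for all $t \geq 1$, so the Gr\"obner degeneration of $I^t$ has initial ideal $\inn_<(I)^t$, a power of the squarefree monomial ideal $\inn_<(I)$. The strategy is then: (a) show $I^t$ has fiber invariant Betti numbers, and (b) show $R/\inn_<(I)^t$ is Golod; then Theorem \ref{thm:golodForFiberInv} immediately gives that $R/I^t$ is Golod.

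For step (b), I would argue that $\inn_<(I)^t$ is Golod. When $t = 1$, $\inn_<(I)$ has linear resolution by part $(3)$ of Theorem \ref{thm:sparseMinorsResults}, and being squarefree (by part $(1)$) it is a squarefree monomial ideal with linear resolution. Here I would invoke Theorem \ref{thm:rainbowIsGolod}: one checks that a squarefree monomial ideal arising as an initial ideal of maximal minors of a (sparse) generic matrix is, after relabeling, a rainbow monomial ideal — indeed the diagonal initial term of an $n\times n$ minor picks out one variable from each of $n$ ``rows,'' which furnish the $n$ color classes — so Theorem \ref{thm:rainbowIsGolod} applies and $R/\inn_<(I)$ is Golod. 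For $t \geq 2$, since $\inn_<(I)$ is a monomial ideal in a polynomial ring, Theorem \ref{thm:herzogMaleki} gives that $\inn_<(I)^t$ is Golod directly. So in all cases $R/\inn_<(I)^t$ is Golod.

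For step (a), I would show $I^t$ has fiber invariant Betti numbers for every $t$. By part $(3)$ of Theorem \ref{thm:sparseMinorsResults}, $\inn_<(I)^t = \inn_<(I^t)$ has linear resolution, and by Corollary \ref{cor:fiberInvSparse} together with the linear powers property coming from the Koszulness of the Rees algebra (as recalled in Remark \ref{rk:proofForSparse}), $I^t$ itself has linear resolution. Thus both $I^t$ and its initial ideal have linear resolutions, so case $(1)$ of Theorem \ref{thm:fiberInvLinear} applies and $I^t$ has fiber invariant Betti numbers with respect to $<$. Combining (a) and (b) with Theorem \ref{thm:golodForFiberInv} yields that $R/I^t$ is Golod, as desired.

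I expect the main obstacle to be the bookkeeping in step (b) that identifies $\inn_<(I)$ with a rainbow monomial ideal: one must verify that the diagonal initial monomials of the maximal minors of a $2$-sided ladder matrix genuinely have the ``one variable per color class'' form and that the linearity hypothesis transfers correctly, so that Theorem \ref{thm:rainbowIsGolod} can be invoked cleanly rather than appealing only to Theorem \ref{thm:herzogMaleki} (which already covers $t \geq 2$ but not $t = 1$). Everything else is an assembly of cited results.
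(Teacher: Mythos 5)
Your proposal is correct and follows essentially the same route as the paper: establish fiber invariance of the Betti numbers of $I^t$ (via Corollary \ref{cor:fiberInvSparse} for $t=1$ and Theorem \ref{thm:fiberInvLinear} for $t\geq 2$), prove Golodness of $\inn_<(I)^t$ (via Theorem \ref{thm:rainbowIsGolod} for $t=1$ and Theorem \ref{thm:herzogMaleki} for $t\geq 2$), and transfer along the Gr\"obner degeneration using Theorem \ref{thm:golodForFiberInv}. Your explicit check that $I^t$ itself has linear resolution before invoking Theorem \ref{thm:fiberInvLinear}, and your identification of the diagonal initial terms with the rainbow structure, are details the paper passes over more quickly but are welcome.
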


\begin{remark}
It is worth mentioning that in the case that $X'$ is fully generic and $k$ has characteristic $0$, the fact that $I_n (X')$ is Golod was originally proved by Srinivasan in \cite{srinivasan1989algebra}; this was a corollary of the explicit DG-algebra structure constructed on the Eagon-Northcott complex. This algebra structure was heavily reliant on the characteristic $0$ assumption, however, so it is not clear how similar techniques could be used to prove Theorem \ref{thm:sparseGolod} in arbitrary characteristic. Likewise, in the case that $t >1$, the fact that $I_n(X')^t$ is Golod in characteristic $0$ follows from the results of \cite{herzog2013ordinary}.
\end{remark}

\begin{proof}
Let $I := I_n (X')$. There are two cases to consider: the first case is for $t=1$ and the second is for $t>1$. 

\textbf{Case 1: $t=1$.} In this case, recall that $I$ has fiber invariant Betti numbers with respect to any term order. Thus, Golodness of $I$ is equivalent to Golodness of $\inn_< (I)$. However, $\inn_< (I)$ is a rainbow monomial ideal with linear resolution and hence Golod by Theorem \ref{thm:rainbowIsGolod}.

\textbf{Case 2: $t>1$.} Let $<$ denote the standard diagonal term order. By Theorem \ref{thm:sparseMinorsResults}, the initial ideal of $I^t$ has linear resolution for all $t > 1$. By Theorem \ref{thm:fiberInvLinear} the ideal $I^t$ has fiber invariant Betti numbers, and by Theorem \ref{thm:herzogMaleki} the ideal $\inn_< (I)^t$ is Golod. Employing Theorem \ref{thm:golodForFiberInv}, it follows that $I^t$ is Golod for all $t>1$. 
\end{proof}

\bibliographystyle{amsalpha}
\bibliography{biblio}
\addcontentsline{toc}{section}{Bibliography}

\end{document}